% AMS-LaTeX 1.2
\documentclass[12pt]{amsart}
\oddsidemargin 3ex
\evensidemargin 3ex
\textheight 7.9in
\textwidth 6.0in

%\font\bbbld=msbm10 scaled\magstep1
\usepackage{amsmath}

\newtheorem{theorem}{Theorem}[section]
\newtheorem{lemma}[theorem]{Lemma}
\newtheorem{proposition}[theorem]{Proposition}

 \theoremstyle{definition}
\newtheorem{definition}[theorem]{Definition}

\theoremstyle{remark}

\numberwithin{equation}{section}

%    Absolute value notation

%    Blank box placeholder for figures (to avoid requiring any
%    particular graphics capabilities for printing this document).

%%%%%%%%%%%%%%%%%%%%%%%%%%

\begin{document}

\title[Dirichlet problem]
{The Dirichlet problem for a class of Hessian quotient equations on Riemannian manifolds}

\author{Xiaojuan Chen}
\address{Faculty of Mathematics and Statistics, Hubei Key Laboratory of Applied Mathematics, Hubei University,  Wuhan 430062, P.R. China}
\email{201911110410741@stu.hubu.edu.cn}

\author{Qiang Tu$^{\ast}$}
\address{Faculty of Mathematics and Statistics, Hubei Key Laboratory of Applied Mathematics, Hubei University,  Wuhan 430062, P.R. China}
\email{qiangtu@hubu.edu.cn}

\author{Ni Xiang}
\address{Faculty of Mathematics and Statistics, Hubei Key Laboratory of Applied Mathematics, Hubei University,  Wuhan 430062, P.R. China}
\email{nixiang@hubu.edu.cn}

\keywords{Hessian quotient equation;  Dirichlet problem, a priori estimates.}

\subjclass[2010]{Primary 35J15; Secondary 35B45.}
\thanks{This research was supported by funds from Hubei Provincial Department of Education
Key Projects D20181003, Natural Science Foundation of Hubei Province, China, No. 2020CFB246 and the National Natural Science Foundation of China No. 11971157.}
\thanks{$\ast$ Corresponding author}

\begin{abstract}
In this paper, we consider the Dirichlet problem for a class of Hessian quotient equations on Riemannian manifolds. Under the assumption of an admissible subsolution, we solve the existence and the uniquness for the Dirichlet problem in a domain without any geometric restrictions on the boundary, based on the a priori estimates for the solutions to the Hessian quotient type equations.

\end{abstract}

\maketitle
%%%%%%%%%%%%%%%%%%%%%%%%%%%%%%%%%%%
\section{Introduction}

Let $(M
, g)$ be a compact Riemannian manifold of dimension $n\geq3$ with smooth boundary $\partial M$ and $\overline{M}:=M\cup\partial M$.
In this paper, we study the Dirichlet  problem for a class of Hessian quotient equations
\begin{equation}\label{Eq}
\left\{
\begin{aligned}
&\left(\frac{\sigma_k}{\sigma_l}\right)^{\frac{1}{k-l}}(\lambda[U])=\psi(x,u,\nabla u) &&in~
M,\\
&u = \varphi &&on~\partial M,
\end{aligned}
\right.
\end{equation}
where $U=\tau(\Delta u)g-\nabla^2u$ with $\tau\geq1$, $\nabla^2u$ denotes the Hessian of $u$, $\lambda[U] = (\lambda_1,\cdots, \lambda_n)$ are the
eigenvalues of $U$ with respect to the metric $g$ and
$\psi$ is a positive $C^{\infty}$ function with respect to $(x,z,p)\in \overline{M}\times \mathbb{R}\times T_xM$, where $T_xM$ denotes the tangent space of $M$ at $x$.

Our interest on the solvability of equation \eqref{Eq} is motivated from
the complex Monge-Amp\`ere type equations. Recently, Harvey-Lawson \cite{Ha12, Ha11-} introduced a class of functions $u\in C^2(\mathbb{C}^n)$, named $(n-1)$-plurisubharmonic, such that the complex Hessian matrix
\begin{eqnarray}\label{c-1}
\bigg[\Big(\sum_{i=1}^{n}\frac{\partial^2u}{\partial
z_m\partial\overline{z}_m}\Big)\delta_{ij}-\frac{\partial^2u}{\partial
z_i\partial\overline{z}_j}\bigg]_{1\leq i,j\leq n}
\end{eqnarray}
is nonnegative definite. For $(n-1)$-plurisubharmonic functions, one can consider the following complex Monge-Amp\`ere equations
\begin{eqnarray}\label{c}
\mathrm{det}\bigg(\Big(\sum_{i=1}^{n}\frac{\partial^2u}{\partial
z_m\partial\overline{z}_m}\Big)\delta_{ij}-\frac{\partial^2u}{\partial
z_i\partial\overline{z}_j}\bigg)=\psi.
\end{eqnarray}
If $\psi$ does not depend on $\nabla u$,
the Dirichlet problem for \eqref{c} on strict pseudo-convex domains in $\mathbb{C}^n$  was solved by Li \cite{Li04}, who also considered a general class of operators. Tosatti-Weinkove \cite{To17, To19} showed that the associated complex
Monge-Amp\`ere equation can be solved on any compact K\"{a}hler manifold.
Harvey-Lawson \cite{Ha11, Ha12} investigated the corresponding complex Monge-Amp\`ere equation and sloved the Dirichlet problem with $\psi=0$ on suitable domains. Then Han-Ma-Wu \cite{Ha09}
considered $k$-convex solutions of complex Laplace equation.
Moreover, the complex Hessian equation involving a gradient term on the left hand sides has attracted the interest of many authors due to its geometric applications such as the Gauduchon conjecture, which was solved by Sz\'ekelyhidi-Tosatti-Weinkove in their work \cite{Sz17}, and also see Guan-Nie \cite{Guan21}.
 For more references, we refer the readers to \cite{Ga84,Fu10, Fu15, Sz18} and
references therein.

If the complex Hessian matrix is replaced by the real Hessian matrix in \eqref{c-1},
a natural question is whether we can study the regularity and solvability to
the Dirichlet boundary problem for this kind of  fully nonlinear  equation (such as \eqref{Eq}). This work is a further study on the Dirichlet problem for \eqref{Eq} with gradient terms on the right sides of the equation following a recent work by Chu-Jiao \cite{Chu20}.
 To ensure the ellipticity of \eqref{Eq}, we need $\lambda[U]\in \Gamma_k$. Hence we introduce the following definition.

\begin{definition}\label{def-1}
A function $u\in C^2(M)$ is called admissible (i.e.$(\eta, k)$-convex) if $\lambda[U] \in \Gamma_k$ for any $x\in M$,
 where $\Gamma_k$ is the Garding cone
\begin{eqnarray*}\label{cone}
\Gamma_{k}=\{\lambda \in \mathbb{R} ^n: \sigma_{j}(\lambda)>0, \forall ~ 1\leq j \leq k\}.
\end{eqnarray*}
\end{definition}

 The main theorem is as follows.
\begin{theorem}\label{main}
Let $l+2\leq k\leq n$, $\varphi \in C^{\infty}(\partial M)$,  $\psi\in C^{\infty}(\overline{M}\times \mathbb{R}\times T_xM)$ with $\psi, \psi_z>0$.
Assume that there exists an admissible subsolution $\underline{u}\in C^2(\overline{M})$ satisfying
\begin{equation}\label{Eq-sub}
\left\{
\begin{aligned}
&\left(\frac{\sigma_k}{\sigma_l}\right)^{\frac{1}{k-l}}\left(\lambda[\underline{U}]\right)\geq \psi(x,\underline{u},\nabla\underline{u}) &&in~
M,\\
&\underline{u} = \varphi &&on~\partial M,
\end{aligned}
\right.
\end{equation}
where $\underline{U}=\tau(\Delta \underline{u})g-\nabla^2\underline{u}$. Then the Dirichlet problem \eqref{Eq} is uniquely solvable for $u\in C^{\infty}(\overline{M})$ with $\lambda[U] \in\Gamma_k$.
\end{theorem}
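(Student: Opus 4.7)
\medskip
\noindent\textbf{Proof proposal.}
The plan is a standard continuity-method argument along the path
\begin{equation*}
F(\lambda[U^t])=t\,\psi(x,u^t,\nabla u^t)+(1-t)F(\lambda[\underline U]),\quad u^t|_{\partial M}=\varphi,
\end{equation*}
where $F(\lambda)=(\sigma_k/\sigma_l)^{1/(k-l)}(\lambda)$, so that $u^0=\underline u$ is a solution at $t=0$ and $u^1$ solves \eqref{Eq}. Openness at each $t$ follows from the implicit function theorem applied to the linearised operator, whose invertibility on $C^{2,\alpha}_0$ is guaranteed by $\psi_z>0$ and the ellipticity coming from $\lambda[U^t]\in\Gamma_k$. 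Uniqueness likewise follows from $\psi_z>0$ via the maximum principle. The closedness of the existence set is equivalent to obtaining \emph{a priori} $C^{2,\alpha}(\overline M)$ estimates that depend only on $\underline u$, $\varphi$, $\psi$ and the geometry of $(M,g)$, after which Evans--Krylov theory (using concavity of $F$ on $\Gamma_k$) plus Schauder bootstrap upgrades $C^2$ to any $C^{k,\alpha}$, finishing existence.

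\smallskip
The first block of estimates is the $C^0$ and $C^1$ bound. For $C^0$, comparison with the subsolution together with $\psi_z>0$ yields $u\ge\underline u$ on $\overline M$; the upper bound is produced by comparing $u$ with an admissible function of the form $A-B\,\mathrm{dist}(\cdot,\partial M)$ (or simply by the $\psi_z>0$ monotonicity applied to $u-\underline u$ at an interior maximum). For the gradient estimate, the boundary part uses $\underline u$ as a lower barrier and a classical upper barrier built from the distance function and the boundary data $\varphi$, so that $|\nabla u|$ is controlled on $\partial M$. The global gradient bound is then obtained by a Bernstein-type maximum principle applied to an auxiliary function $w=|\nabla u|^2\,e^{\phi(u)}$, using the structural inequality $\sum F^{ii}\ge c_0>0$ valid on $\Gamma_k$ for the quotient operator.

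\smallskip
The heart of the argument is the $C^2$ estimate, which I would reduce in the usual way: first observe that for any second derivative bound $\max_{\overline M}|\nabla^2 u|$ can be controlled by its maximum on $\partial M$ via a test function of the form $W=\max_\xi(U_{\xi\xi})e^{\phi(|\nabla u|^2)+\Phi(u-\underline u)}$, with an appropriate choice of $\phi,\Phi$; the concavity of $F$ and the presence of a strict subsolution produce the crucial negative third-order terms that close the maximum principle. All that remains are boundary $C^2$ bounds. Tangential-tangential estimates come free from twice-differentiating $u-\underline u=0$ along $\partial M$. The mixed tangential-normal $u_{t\nu}$ estimate is obtained on the inward half-ball near $\partial M$ by a Guan-type barrier of the shape $v=(u-\underline u)+t\,d-\tfrac{N}{2}d^2$, exploiting that $\underline U-U$ has a positive eigenvalue picked up by the subsolution inequality \eqref{Eq-sub}. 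The main obstacle, and the step I expect to be the most delicate, is the pure normal derivative $u_{\nu\nu}$ on $\partial M$: since no geometric restriction is imposed on $\partial M$, one cannot use the standard Caffarelli--Nirenberg--Spruck trick. Instead I would follow the Chu--Jiao strategy, bounding $u_{\nu\nu}$ by controlling all eigenvalues of $U$ restricted to the tangent space of $\partial M$ and applying the Trudinger-type argument that replaces the missing boundary convexity by the subsolution hypothesis, using carefully that $U$ is a linear transform of $\nabla^2u$ with $\tau\ge 1$ so that the $\Gamma_k$-admissibility of $\lambda[U]$ still yields uniform positivity of $\sum_{i\ne\nu}F^{ii}$ on $\partial M$.

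\smallskip
Once the full $C^2(\overline M)$ estimate is established, the equation becomes uniformly elliptic on the set of admissible functions, so Evans--Krylov produces a $C^{2,\alpha}$ bound, Schauder then supplies $C^{k,\alpha}$ for all $k$, and standard continuity-method closure yields the smooth solution $u\in C^\infty(\overline M)$ with $\lambda[U]\in\Gamma_k$; uniqueness is the comparison principle noted above.
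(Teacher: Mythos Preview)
Your proposal is correct and follows essentially the same route as the paper: continuity along the path $F(U^t)=t\psi+(1-t)F(\underline U)$, a priori $C^2$ estimates (interior via a log-eigenvalue test function with the $|\nabla u|^2$ and $(\underline u-u)$ terms, boundary via the Guan-type barrier $v=(u-\underline u)+td-\tfrac{N}{2}d^2$ and a Trudinger-style $u_{\nu\nu}$ argument), then Evans--Krylov and Schauder. One minor point: your suggested $C^0$ upper bounds are more complicated than needed---the paper simply observes that $\lambda[U]\in\Gamma_k\subset\Gamma_1$ forces $(\tau n-1)\Delta u>0$, so $u$ is subharmonic and $\sup_{\overline M}u\le\sup_{\partial M}\varphi$ by the maximum principle.
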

In order to prove Theorem \ref{main}, a major challenge comes from second order estimates for a domain with arbitrary boundary shape except being smooth. We establish the following global second order estimates.
\begin{theorem}\label{main-1}
Let $l+2\leq k\leq n$, $\varphi \in C^{2}(\partial M)$,  $\psi\in C^{2}(\overline{M}\times \mathbb{R}\times T_xM)$ with $\psi, \psi_z>0$, $u\in C^4(M)\cap C^2(\overline{M})$ be an admissible solution of Dirichlet problem \eqref{Eq}.
Assume that there exists an admissible subsolution $\underline{u}\in C^2(\overline{M})$ satisfying \eqref{Eq-sub}. There exists $C$ depending on $n, k, l, \|u\|_{C^1}, \|\underline{u}\|_{C^2}, \inf \psi,  \|\psi\|_{C^2}$ and the curvature
tensor $R$ such that
$$\sup_{ \overline{M}} |\nabla^2 u | \leq C.$$
\end{theorem}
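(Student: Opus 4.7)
The plan is to reduce the bound on $|\nabla^2 u|$ to a one-sided upper bound on $\Delta u$, then to split the work into a global maximum principle argument and separate boundary estimates. Because $\lambda[U]\in\Gamma_k\subset\Gamma_1$ one has $\mathrm{tr}(U)=(n\tau-1)\Delta u>0$, and the Hessian eigenvalues satisfy $\mu_i(\nabla^2 u)=\frac{\tau}{n\tau-1}\mathrm{tr}(U)-\lambda_i(U)$. Combined with the standard $\Gamma_k$ bound $\lambda_i(U)\geq -C_{n,k}\,\mathrm{tr}(U)$, this shows that a uniform upper bound on $\mathrm{tr}(U)$ controls the full norm $|\nabla^2 u|$.

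For the interior estimate I would test with an auxiliary function
$$W(x)=\log U_{\xi\xi}+\phi(|\nabla u|^2)+A(\underline{u}-u),$$
where $\xi\in T_xM$ is a unit eigenvector for the largest eigenvalue of $U$ at a putative interior maximum (smoothed by the usual eigenvalue perturbation so that $W$ is $C^2$), $\phi(t)=-\log(2K-t)$ with $K>\sup|\nabla u|^2$, and $A\gg 1$ is to be chosen. At an interior maximum $x_0$ I apply the linearized operator $\mathcal{L}$ of $F(U)=\psi(x,u,\nabla u)$ to $W$, differentiating the equation once and twice in $\xi$. Concavity of $F=(\sigma_k/\sigma_l)^{1/(k-l)}$ disposes of the third-order terms, while commutator identities for covariant derivatives produce curvature errors of order $U_{\xi\xi}\sum F^{ii}$ involving the tensor $R$. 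These are absorbed by the subsolution barrier through the key inequality
$$F^{ij}(\underline{U}_{ij}-U_{ij})\geq \delta\Bigl(1+\sum_{i} F^{ii}\Bigr)$$
in the regime $U_{\xi\xi}(x_0)\gg 1$, which follows from concavity together with a Guan-type perturbation of $\underline{u}$ keeping $\lambda[\underline{U}]$ strictly in $\Gamma_k$. For $A$ sufficiently large this forces $\mathcal{L}W(x_0)\geq 0$ to fail unless $U_{\xi\xi}(x_0)\leq C$, contradicting maximality.

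For the boundary estimates I work in a local orthonormal frame $\{\tau_1,\dots,\tau_{n-1},\nu\}$ at $x_0\in\partial M$ with $\nu$ the interior normal. Tangential-tangential derivatives $u_{\alpha\beta}(x_0)$ for $\alpha,\beta<n$ are controlled directly by $\|\varphi\|_{C^2(\partial M)}$ and the second fundamental form of $\partial M$ using $u-\varphi\equiv 0$ on $\partial M$. Tangential-normal derivatives $u_{\alpha\nu}(x_0)$ are handled by a standard barrier
$$\Theta=A(\underline{u}-u)+Bd-Nd^2,$$
with $d=\mathrm{dist}(\cdot,\partial M)$, applied via the maximum principle on a collar neighborhood and compared with $\pm T(u-\underline{u})$, where $T$ is a tangential vector field extending $\tau_\alpha$; the sign $\mathcal{L}\Theta\leq-\epsilon(1+\sum F^{ii})$ from the subsolution closes the argument.

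The main obstacle is the double-normal bound $u_{\nu\nu}(x_0)\leq C$, because $\partial M$ is an arbitrary smooth hypersurface and $U$ carries $-\nabla^2 u$, so convexity of $\partial M$ does not directly provide the required sign. Following the strategy of Chu-Jiao, I would restrict the equation to $\partial M$ in a frame diagonalizing the tangential block of $U$, express $U_{\nu\nu}$ in terms of quantities already controlled by the previous two steps, and argue by contradiction: if $u_{\nu\nu}(x_0)\to\infty$ along a sequence, a Trudinger-type quantitative use of the monotonicity and concavity of $t\mapsto(\sigma_k/\sigma_l)^{1/(k-l)}(\lambda[U']+t\,e_\nu\otimes e_\nu)$, together with the boundary inequality $F(\underline{U})\geq\psi\geq F(U)$, yields a contradiction. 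The admissible subsolution hypothesis is precisely what replaces a geometric convexity assumption on $\partial M$ in the classical theory.
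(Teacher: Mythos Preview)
Your overall architecture (reduce to an interior estimate plus the three boundary cases, with a subsolution-based barrier replacing convexity of $\partial M$) matches the paper, but the interior argument you sketch has a genuine gap that is in fact the main point of Theorem~\ref{main-1}. When you differentiate the equation twice in the top direction, the gradient dependence of $\psi$ produces $\psi_{p_ip_j}u_{i\xi}u_{j\xi}$, which after dividing by the top eigenvalue leaves a term of order $C_0\,u_{11}$ with no factor $\sum F^{ii}$. This is \emph{not} absorbed by the barrier inequality $F^{ij}(\underline{U}-U)_{ij}\ge\delta(1+\sum F^{ii})$, because for $(\sigma_k/\sigma_l)^{1/(k-l)}$ the sum $\sum F^{ii}$ does not in general grow linearly in the top eigenvalue. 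The paper closes this gap by a dichotomy: if all subordinate Hessian eigenvalues satisfy $|u_{ii}|\le\delta u_{11}$, a direct computation using the algebraic structure of $\sigma_k/\sigma_l$ (and crucially the hypothesis $k\ge l+2$) shows $\sum Q^{ii}\gtrsim u_{11}$, so the barrier does absorb $C_0u_{11}$; otherwise some $|u_{jj}|>\delta u_{11}$ with $j\ge2$, and the gradient term in the test function supplies $\tfrac{a}{4}Q^{jj}u_{jj}^2\gtrsim u_{11}^2\sum F^{ii}$, which dominates. Your sentence ``concavity disposes of the third-order terms'' also hides a second difficulty: the bad term is not $F^{ij,rs}\nabla_\xi U_{ij}\nabla_\xi U_{rs}$ (which concavity does control) but the negative $-Q^{ii}u_{11i}^2/u_{11}^2$ coming from the Hessian of the logarithm. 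The paper does not discard the full concavity term; it keeps the off-diagonal pieces $-2\sum_{i\ge2}Q^{1i,i1}u_{1i1}^2/u_{11}$ and uses the identity $-Q^{1i,i1}=(Q^{ii}-Q^{11})/(u_{11}-u_{ii})$ (Proposition~\ref{th-lem-07}) to cancel $-\sum_{i\ge2}Q^{ii}u_{11i}^2/u_{11}^2$ after first showing (Step~3) that $|u_{ii}|\le\xi u_{11}$ for $i\ge2$.

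For the boundary double-normal estimate your description is too schematic to be an argument; in particular the inequality ``$F(\underline{U})\ge\psi\ge F(U)$'' is not available since the $\psi$'s are evaluated at different points $(\underline u,\nabla\underline u)$ versus $(u,\nabla u)$. The paper's proof splits on $\tau>1$ (where a one-line algebraic argument via $F^{ij}U_{ij}=\psi$ suffices) versus $\tau=1$. In the latter case, and especially when $k=n$, one needs an \emph{additional} barrier argument on $\partial M$ to show $\min_{\partial M}\mathrm{tr}[\nabla_{\alpha\beta}u]\ge c>0$, after which the Caffarelli--Nirenberg--Spruck eigenvalue asymptotics yield the bound on $\nabla_{nn}u$. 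None of this is captured by a generic ``Trudinger-type'' monotonicity statement.
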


%As a corollary of the above results, It follows that
If $U=\tau(\Delta u)g-\nabla^2u$ is replaced by the Hessian matrix $\nabla^2u$, equation \eqref{Eq} becomes the classical Hessian quotient equation
\begin{equation}\label{hq-Eq-1}
\left(\frac{\sigma_k}{\sigma_l}\right)^{\frac{1}{k-l}}\left(\lambda[\nabla^2 u]\right)=\psi(x,u,\nabla u) \quad \mbox{in}~
M,
\end{equation}
which has been widely studied in the past decades for the Euclidean case.
When $\psi=\psi(x)$,
 $C^2$ estimates  were treated by Caffarelli-Nirenberg-Spruck \cite{CNS85} for $l=0$,  where they treated a general class of fully nonlinear equations under conditions on the geometry of $\partial M$, followed by \cite{Guan94,LY90}. Then such estimates for \eqref{hq-Eq-1} have been established  by Trudinger \cite{Tr95}, Ivochkina-Trudinger-Wang \cite{ITW04} who considered the degenerate case, Guan \cite{Guan14} who considered to treat a general class of fully nonlinear equations on Riemannian
manifolds, without geometric restrictions to the boundary.
 When $\psi=\psi(x, u, \nabla u)$, equation \eqref{hq-Eq-1} falls into the setup of Guan-Jiao \cite{Guan15} (see also \cite{Guan99}), and the $C^2$ estimate was obtained under the concavity assumption of $\psi$ on $\nabla u$.  In Theorem \ref{main-1},  we remove this concavity assumption for equation \eqref{Eq}.

It would be worthwhile to note that this type of equation \eqref{Eq} arise naturally from many other important geometric problems. Another example is a class of prescribed curvature problems. A $(0, 2)$-tensor on  a hypersurface $M\subset \mathbb{R}^{n+1}$ is defined by
\begin{eqnarray*}
\eta_{ij}=Hg_{ij}-h_{ij},
\end{eqnarray*}
where $g_{ij}$ is the induced metric of $M$ from $\mathbb{R}^{n+1}$,
$h_{ij}$ and $H$ are the second fundamental form and the mean
curvature of $M$ respectively.  The $(n-1)$-convex
hypersurface (i.e. $\eta_{ij}$ is nonnegative definite) has
been studied intensively by Sha \cite{S86, S87}, Wu \cite{Wu87}, and
Harvey-Lawson \cite{Ha13}. Recently, Chu-Jiao \cite{Chu20}
considered the following prescribed curvature problem
\begin{eqnarray*}
\sigma_{k}(\eta_{ij}(X))=\psi(X, \nu(X)), \quad X \in M,
\end{eqnarray*}
where $\nu$ was the unit outer normal vector of $M$. Later on,
The authors \cite{Chen20} studied the corresponding Hessian
quotient type  prescribed curvature problem. Moreover,
an analogue of equation \eqref{Eq}  on compact  manifolds also appeared naturally in conformal geometry, see Gursky-Viaclovsky \cite{Ger03}, Li-Sheng \cite{LS11} and Sheng-Zhang \cite{Sheng07}.

The organization of the paper is as follows. In Section 2 we start with some preliminaries.
Our proof of the estimates heavily depends on
results in Section 3 and 4. $C^1$ estimates are given in Section 3. In Section 4 we derive the global estimates for second derivatives, and finish the proof of Theorem \ref{main} and Theorem \ref{main-1}.

%%%%%%%%%%%%%%%%%%%%%%%%%%%%%%%%%%%%%%%%%%%%%%%%%%%%%%%%%%%%%%%%%%%%%%%%%%%%%%%%%%%%%%%%%%%%%%%%%%%%%%%%%%%%%%%%%%%%%%%%%%%%%%%%%%%%%%%%%%%%%%%%%%%%%%%%%%%%%%%%%%%%
\section{Preliminaries}
%%%%%%%%%%%%%%%%%%%%%%%%%%%%%%%%%%%%%%%%%%%%%%%%%%%%%%%%%%%%%%%%%%%%%%%%%%%%%%%%%%%%%%%%%%%%%%%%%%%%%%%%%%%%%%%%%%%%%%%%%%%%%%%%%%%%%%%%%%%%%%%%%%%%%%%%%%%%%%%%%%%%

Let $\lambda=(\lambda_1,\dots,\lambda_n)\in\mathbb{R}^n$, we recall
the definition of elementary symmetric function for $1\leq k\leq n$
\begin{equation*}
\sigma_k(\lambda)= \sum _{1 \le i_1 < i_2 <\cdots<i_k\leq
n}\lambda_{i_1}\lambda_{i_2}\cdots\lambda_{i_k}.
\end{equation*}
We also set $\sigma_0=1$ and $\sigma_k=0$ for $k>n$ or $k<0$. The Garding cone is defined by
\begin{equation*}
\Gamma_k  = \{ \lambda  \in \mathbb{R}^n :\sigma _i (\lambda ) >
0,\forall 1 \le i \le k\}.
\end{equation*}
We denote $\sigma_{k-1}(\lambda|i)=\frac{\partial
\sigma_k}{\partial \lambda_i}$ and
$\sigma_{k-2}(\lambda|ij)=\frac{\partial^2 \sigma_k}{\partial
\lambda_i\partial \lambda_j}$. Next, we list some properties of
$\sigma_k$ which will be used later.

\begin{proposition}\label{sigma}
Let $\lambda=(\lambda_1,\dots,\lambda_n)\in\mathbb{R}^n$ and $1\leq
k\leq n$, then we have

(1) $\Gamma_1\supset \Gamma_2\supset \cdot\cdot\cdot\supset
\Gamma_n$;

(2) $\sigma_{k-1}(\lambda|i)>0$ for $\lambda \in \Gamma_k$ and
$1\leq i\leq n$;

(3) $\sigma_k(\lambda)=\sigma_k(\lambda|i)
+\lambda_i\sigma_{k-1}(\lambda|i)$ for $1\leq i\leq n$;

(4)
$\sum_{i=1}^{n}\frac{\partial[\frac{\sigma_{k}}{\sigma_{l}}]^{\frac{1}{k-l}}}
{\partial \lambda_i}\geq [\frac{C^k_n}{C^l_n}]^{\frac{1}{k-l}}$ for
$\lambda \in \Gamma_{k}$ and $0\leq l<k$;

(5) $\Big[\frac{\sigma_k}{\sigma_l}\Big]^{\frac{1}{k-l}}$ are
concave in $\Gamma_k$ for $0\leq l<k$;

(6) If $\lambda_1\geq \lambda_2\geq \cdot\cdot\cdot\geq \lambda_n$,
then $\sigma_{k-1}(\lambda|1)\leq \sigma_{k-1}(\lambda|2)\leq
\cdot\cdot\cdot\leq \sigma_{k-1}(\lambda|n)$ for $\lambda \in
\Gamma_k$;

(7)
$\sum_{i=1}^{n}\sigma_{k-1}(\lambda|i)=(n-k+1)\sigma_{k-1}(\lambda)$.
\end{proposition}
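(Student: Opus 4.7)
The plan is to establish the seven items of Proposition \ref{sigma} in the following order: the formal identities (1), (3), (7) first; then the positivity/monotonicity statements (2) and (6) by induction on $k$; then the concavity (5), which is the main obstacle; and finally (4), which drops out of (5) via Euler homogeneity.

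For (1), the inclusion $\Gamma_{j+1} \subset \Gamma_j$ is immediate: requiring $\sigma_i(\lambda) > 0$ for all $i \leq j+1$ is strictly stronger than requiring it for $i \leq j$. Statement (3) comes from partitioning the $k$-subsets indexing $\sigma_k(\lambda)$ according to whether they contain $i$: the subsets excluding $i$ contribute $\sigma_k(\lambda|i)$, and those including $i$ factor as $\lambda_i\,\sigma_{k-1}(\lambda|i)$. For (7), I would use a double-counting argument: each monomial $\lambda_{j_1}\cdots\lambda_{j_{k-1}}$ in $\sigma_{k-1}(\lambda)$ appears in $\sigma_{k-1}(\lambda|i)$ for precisely the $n-k+1$ indices $i$ outside $\{j_1,\ldots,j_{k-1}\}$, matching the claimed coefficient.

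Next, (2) is proved by induction on $k$. The base case $k=1$ gives $\sigma_0(\lambda|i) = 1 > 0$. The inductive step rests on the classical fact that $\Gamma_k$ is the connected component of $\{\sigma_k > 0\}$ containing the positive orthant (equivalently, that $\sigma_k$ is hyperbolic in direction $(1,\ldots,1)$); this allows one to show that if $\lambda \in \Gamma_k$ then the reduced vector $(\lambda|i) \in \mathbb{R}^{n-1}$ lies in the corresponding $\Gamma_{k-1}$, and the inductive hypothesis one level up delivers $\sigma_{k-1}(\lambda|i) > 0$. Statement (6) is then a direct consequence of the identity
\begin{equation*}
\sigma_{k-1}(\lambda|i+1) - \sigma_{k-1}(\lambda|i) = (\lambda_i - \lambda_{i+1})\,\sigma_{k-2}(\lambda|i,i+1),
\end{equation*}
obtained by splitting each of the two $\sigma_{k-1}$'s using (3) with respect to the other omitted index; the right-hand side is nonnegative since $\lambda_i \geq \lambda_{i+1}$ after ordering, and $\sigma_{k-2}(\lambda|i,i+1) \geq 0$ by iterating the argument of (2) one dimension further down.

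The concavity (5) is the main obstacle. I would invoke the classical G\"arding--Lin--Trudinger framework: $\sigma_k$ is a hyperbolic polynomial on $\mathbb{R}^n$ whose hyperbolicity cone contains $\Gamma_k$, so $\sigma_k^{1/k}$ is concave on $\Gamma_k$ by G\"arding's theorem, and the quotient refinement $(\sigma_k/\sigma_l)^{1/(k-l)}$ follows from an interpolation based on the Newton--Maclaurin chain, as in Trudinger's treatment. Granted (5), item (4) is a one-line consequence. Setting $F := (\sigma_k/\sigma_l)^{1/(k-l)}$, which is positively $1$-homogeneous, Euler's identity gives $\sum_i \lambda_i F_i(\lambda) = F(\lambda)$, while concavity along the segment from $\lambda$ to $\mathbf{1}\in\Gamma_n\subset\Gamma_k$ yields
\begin{equation*}
F(\mathbf{1}) \leq F(\lambda) + \sum_i (1-\lambda_i)\, F_i(\lambda) = \sum_i F_i(\lambda),
\end{equation*}
and evaluating $F(\mathbf{1}) = (C_n^k/C_n^l)^{1/(k-l)}$ delivers the stated lower bound.
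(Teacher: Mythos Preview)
Your sketch is correct. The paper itself does not prove Proposition~\ref{sigma} at all: its ``proof'' consists of citations to the literature (Lieberman, Huisken--Sinestrari, Gerhardt, Caffarelli--Nirenberg--Spruck), remarking that the seven items are well known. Your approach is therefore strictly more self-contained: you establish the combinatorial identities (1), (3), (7) directly; derive (2) and (6) from the G\r{a}rding-cone reduction $(\lambda|i)\in\Gamma_{k-1}^{(n-1)}$ whenever $\lambda\in\Gamma_k$, together with the difference identity $\sigma_{k-1}(\lambda|j)-\sigma_{k-1}(\lambda|i)=(\lambda_i-\lambda_j)\sigma_{k-2}(\lambda|i,j)$; invoke the G\r{a}rding hyperbolicity framework for (5); and then obtain (4) from (5) via the tangent-line inequality for a $1$-homogeneous concave function evaluated at $\mathbf{1}$. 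One minor wording issue: in your inductive step for (2), once you know $(\lambda|i)\in\Gamma_{k-1}^{(n-1)}$ the conclusion $\sigma_{k-1}(\lambda|i)>0$ is by \emph{definition} of $\Gamma_{k-1}$, not by the inductive hypothesis---the induction (or rather the G\r{a}rding argument) is needed to establish the reduction itself. The only place where genuine work remains hidden is (5), for which both you and the paper ultimately defer to the classical sources.
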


\begin{proof}
All the properties are well known. For example, see Chapter XV in
\cite{Li96} or \cite{Hui99} for proofs of (1), (2), (3),  (6) and
(7); see Lemma 2.2.19 in \cite{Ger06} for the proof of (4); see
\cite{CNS85} and \cite{Li96} for the proof of (5).
\end{proof}

The generalized Newton-MacLaurin inequality is as follows, which
will be used later.
\begin{proposition}\label{NM}
For $\lambda \in \Gamma_m$ and $m > l \geq 0$, $ r > s \geq 0$, $m
\geq r$, $l \geq s$, we have
\begin{align}
\Bigg[\frac{{\sigma _m (\lambda )}/{C_n^m }}{{\sigma _l (\lambda
)}/{C_n^l }}\Bigg]^{\frac{1}{m-l}} \le \Bigg[\frac{{\sigma _r
(\lambda )}/{C_n^r }}{{\sigma _s (\lambda )}/{C_n^s
}}\Bigg]^{\frac{1}{r-s}}. \notag
\end{align}
\end{proposition}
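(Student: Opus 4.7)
The plan is to reduce Proposition \ref{NM} to the log-concavity of the normalized elementary symmetric functions, and then invoke the three-chord property of concave sequences. Set $a_k := \log\!\bigl(\sigma_k(\lambda)/C_n^k\bigr)$ for $0 \leq k \leq m$; this is well defined because $\lambda \in \Gamma_m$ forces $\sigma_k(\lambda) > 0$ throughout that range. The claimed inequality is then equivalent to the secant-slope comparison
\begin{equation*}
\frac{a_m - a_l}{m - l} \,\leq\, \frac{a_r - a_s}{r - s}.
\end{equation*}

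The first step is to establish concavity of the finite sequence $(a_0, a_1, \ldots, a_m)$, i.e.\ $2 a_k \geq a_{k-1} + a_{k+1}$ for $1 \leq k \leq m-1$. This follows from the classical Newton inequality
\begin{equation*}
\bigl(\sigma_k/C_n^k\bigr)^2 \,\geq\, \bigl(\sigma_{k-1}/C_n^{k-1}\bigr)\bigl(\sigma_{k+1}/C_n^{k+1}\bigr),
\end{equation*}
which holds for every $\lambda \in \R^n$. The standard proof iterates Rolle's theorem on $\prod_i(t-\lambda_i)$ until the problem reduces to the $n=2$ case, which is AM--GM. Taking logarithms is legitimate on $\Gamma_m$ because all three factors above are strictly positive there whenever $k+1 \leq m$.

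The second step is the standard three-chord comparison for concave sequences. Extending $(a_k)$ linearly to a concave function $\phi$ on $[0, m]$, the monotonicity of the secant slope of a concave function in either argument yields
\begin{equation*}
\frac{a_r - a_s}{r - s} \,\geq\, \frac{a_m - a_s}{m - s} \,\geq\, \frac{a_m - a_l}{m - l},
\end{equation*}
where the first inequality pushes the right endpoint from $r$ to $m$ (using $r \leq m$) and the second pushes the left endpoint from $s$ to $l$ (using $s \leq l$); the hypotheses $s < r$ and $l < m$ guarantee every secant slope is defined. Exponentiating restores Proposition \ref{NM}.

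The only potential obstacle is ensuring Newton's inequality is available on $\Gamma_m$ rather than only on the positive orthant; this is handled cleanly by its universal validity for $\lambda \in \R^n$ together with the positivity of $\sigma_k$ on $\Gamma_m$ for $k \leq m$. No tool beyond Newton's inequality and elementary concavity is required, and in particular no concavity property of the Hessian quotient operator $(\sigma_k/\sigma_l)^{1/(k-l)}$ enters the argument.
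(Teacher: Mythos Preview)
Your argument is correct. The paper itself does not prove Proposition~\ref{NM}; it merely cites Spruck~\cite{S05}. Your reduction to log-concavity of the normalized symmetric functions $\sigma_k/C_n^k$ via Newton's inequality, followed by the secant-slope monotonicity for concave sequences, is precisely the standard derivation of the generalized Newton--MacLaurin inequality and is in substance what the cited reference contains, so there is nothing to contrast.
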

\begin{proof}
See \cite{S05}.
\end{proof}

In this paper, $\nabla$ denotes the Levi-Civita connection on $(M , g)$ and the curvature tensor
is defined by
$$R(X, Y )Z = - \nabla_X \nabla_Y Z + \nabla_Y \nabla_X Z + \nabla_{[X,Y]}Z.$$
Let $e_1,e_2,\cdots,e_n$ be local frames on $M$ and denote $g_{ij}=g(e_i,e_j)$, $\{g^{ij}\}=\{g_{ij}\}^{-1}$,
while the Christoffel symbols $\Gamma^k_{ij}$ and curvature coefficients are given respectively by $\nabla_{e_i}e_j=\Gamma^k_{ij}e_k$ and
$$R_{ijkl}=g(R(e_k,e_l)e_j,e_i),\quad R^i_{jkl}=g^{im}R_{mjkl}.$$
We shall write $\nabla_i=\nabla_{e_i}$, $\nabla_{ij}=\nabla_i\nabla_j-\Gamma^k_{ij}\nabla_k$, etc.
For a differentiable function $u$ defined on $M$, we usually identify $\nabla u$ with its gradient,
and use $\nabla^2 u$ to denote its Hessian which is locally given by $\nabla_{ij} u= \nabla_i(\nabla_j u)
-\Gamma^k_{ij}\nabla_k u$. We note that $\nabla_{ij} u=\nabla_{ji} u$ and
\begin{equation}\label{req1}
  \nabla_{ijk} u-\nabla_{jik} u=R^l_{kij}\nabla_lu,
\end{equation}
\begin{equation}\label{req0}
  \nabla_{ij}(\nabla_ku) = \nabla_{ijk}u + \Gamma^l_{ik}\nabla_{jl}u +\Gamma^l_{jk}\nabla_{il}u + \nabla_{\nabla_{ij}e_k}u,
\end{equation}
\begin{equation}\label{req2}
  \nabla_{ijkl}u-\nabla_{ikjl}u=R^m_{ljk}\nabla_{im}u+\nabla_iR^m_{ljk}\nabla_mu,
\end{equation}
\begin{equation}\label{req3}
  \nabla_{ijkl}u-\nabla_{jikl}u=R^m_{kij}\nabla_{ml}u+R^m_{lij}\nabla_{km}u.
\end{equation}
From \eqref{req2} and \eqref{req3}, we obtain
\begin{eqnarray}\label{req4}
% \nonumber to remove numbering (before each equation)
 \nonumber \nabla_{ijkl}u-\nabla_{klij}u&=& R^m_{ljk}\nabla_{im}u+\nabla_iR^m_{ljk}\nabla_mu+R^m_{lik}\nabla_{jm}u \\
   && +R^m_{jik}\nabla_{lm}u+R^m_{jil}\nabla_{km}u+\nabla_kR^m_{jil}\nabla_mu.
\end{eqnarray}
For convenience, we introduce
the following notations
$$F(U)=\bigg[\frac{\sigma_k(\lambda[U])}{\sigma_l(\lambda[U])}\bigg]^{\frac{1}{k-l}},
\quad F^{ij}=\frac{\partial F}{\partial U_{ij}}, \quad F^{ij, r
s}=\frac{\partial^2 F}{\partial U_{ij}\partial U_{rs}},\quad
  Q^{ij}=\frac{\partial F}{\partial u_{ij}}, \quad Q^{ij, r
s}=\frac{\partial^2 F}{\partial u_{ij}\partial u_{rs}}.$$
Let $u\in C^{\infty}(\overline{M})$ be an admissible solution of equation \eqref{Eq}. Under orthonormal local frames $e_1,\cdots,e_n$, equation \eqref{Eq} is expressed in the form
\begin{equation}\label{FU}
  F(U):=f(\lambda[U])=\psi.
\end{equation}
For simplicity, we shall still write equation \eqref{Eq} in the form \eqref{FU} even if $e_1,\cdots,e_n$ are not necessarily orthonormal, although more precisely it should be
$$F([\gamma^{ik}U_{kl}\gamma^{lj}])=\psi,$$
where $\gamma^{ij}$ is the square root of $g^{ij}: \gamma^{ik}\gamma^{kj}=g^{ij}$. Whenever we differentiate the equation, it will make no difference as long as we use covariant derivatives.
Assume that $\overline{A}$ is an $n\times n$ matrix and $T:\overline{A}\rightarrow T(\overline{A})$ is defined as $T(\overline{A})=\tau (tr(\overline{A}))I-\overline{A}$. Let $Q=F\small{\circ}T$, then equation \eqref{Eq}
can also be written as
\begin{equation*}
  Q(\nabla^2u):=\widetilde{f}(\widetilde{\lambda}[\nabla^2u])=\psi,
\end{equation*}
Hence $Q^{ij}=\frac{\partial Q}{\partial u_{ij}}=\frac{\partial F}{\partial u_{ij}}=\tau\sum_lF^{ll}\delta_{ij}-F^{ij}$ and then
\begin{equation}\label{Quii}
  Q^{ij}u_{ij}=F^{ij}U_{ij}=f_i\lambda_i=\widetilde{f}_i\widetilde{\lambda}_i=\psi.
\end{equation}
Differentiating \eqref{FU}, we get
\begin{equation}\label{Quiik}
  Q^{ij}\nabla_ku_{ij}=F^{ij}\nabla_kU_{ij}=\psi_k+\psi_zu_k+\psi_{p_i}u_{ik}.
\end{equation}

%Thus,
%$$F^{ii}= \frac{1}{k-l} \left(\frac{\sigma_k(U)}{\sigma_l(U)}\right)^{\frac{1}{k-l}-1} \frac{\sigma_{k-1}(U|i)\sigma_l(U)-\sigma_k(U)\sigma_{l-1}(U|i)}{\sigma_l^2(U)}.$$
%$$Q^{ij}=\tau\sum_{l=1}^{n}F^{ll}\delta_{ij}-F^{ij}.$$

The following propositions are essential which will be used later.  More details can be seen in \cite{Chen20}.
\begin{proposition}\label{ellipticconcave}
Let $M$ be a smooth $(\eta, k)$-convex closed hypersurface in $\mathbb{R}^{n+1}$
and $0\leq l< k-1$. Then the operator
\begin{equation*}
F(U)=\left(\frac{\sigma_k(\lambda[U])}{\sigma_{l}(\lambda[U])}\right)^{\frac{1}{k-l}}
\end{equation*}
is elliptic and concave with respect to $U$. Moreover we have
\begin{equation*}
\sum F^{ii} \geq \left(\frac{C_n^k}{C_n^l}\right)^{\frac{1}{k-l}}.
\end{equation*}
\end{proposition}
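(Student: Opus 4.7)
My plan is to extract the statement from the intrinsic properties of the symmetric function
\[
f(\lambda) := \left(\frac{\sigma_k(\lambda)}{\sigma_l(\lambda)}\right)^{\frac{1}{k-l}}
\]
on $\Gamma_k$, and then invoke the standard machinery that lifts smoothness, ellipticity and concavity from the ``eigenvalue'' function $f$ to the matrix-valued operator $F(U) = f(\lambda[U])$; the hypersurface $M$ in the hypothesis plays no role beyond guaranteeing that $\lambda[U] \in \Gamma_k$.

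First I would record the three analytic properties of $f$ on the open convex cone $\Gamma_k$. Positivity and smoothness are immediate from the inclusion $\Gamma_k \subset \Gamma_l$ in Proposition~\ref{sigma}(1), which ensures $\sigma_l > 0$ on $\Gamma_k$. Strict monotonicity $\partial f/\partial \lambda_i > 0$ follows by combining Proposition~\ref{sigma}(2) with (4), and concavity of $f$ on $\Gamma_k$ is precisely Proposition~\ref{sigma}(5). The hypothesis $0 \leq l < k-1$ is consistent with, and in fact stronger than, what these items require.

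Next I would pass from $f$ to $F$ by invoking the classical result of Caffarelli-Nirenberg-Spruck (see \cite{CNS85}; also Chapter 2 of \cite{Ger06}): whenever $f$ is smooth, symmetric, monotone and concave on a symmetric open convex cone $\Gamma \subset \R^n$, the induced matrix function $F(A) := f(\lambda[A])$, defined on the set of symmetric matrices with $\lambda[A] \in \Gamma$, is smooth, has positive-definite first-derivative matrix $[F^{ij}]$ (ellipticity), and is concave in the matrix argument $A$. Applied to $(f, \Gamma_k)$, this yields both the ellipticity and concavity assertions.

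Finally, for the trace lower bound, I would work at a point and in a frame in which $U$ is diagonal: then $F^{ij}$ is diagonal with $F^{ii} = \partial f/\partial \lambda_i$, so
\[
\sum_i F^{ii} = \sum_i \frac{\partial f}{\partial \lambda_i} \geq \left(\frac{C_n^k}{C_n^l}\right)^{\frac{1}{k-l}}
\]
directly by Proposition~\ref{sigma}(4); since the left side is frame-independent, this is the claimed bound. The only technical subtlety in the entire argument is justifying the concavity of $F$ at matrices with repeated eigenvalues, but this is by now completely standard (and needs no new computation here), so I do not anticipate any real obstacle.
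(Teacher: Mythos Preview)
Your proposal is correct and follows the standard route; the paper itself supplies no proof but simply refers the reader to \cite{Chen20}, where the argument is essentially the one you outline (reduce to the eigenvalue function $f$ on $\Gamma_k$, quote its monotonicity and concavity, and lift to matrices via the Caffarelli--Nirenberg--Spruck machinery).

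One small imprecision worth flagging: the strict monotonicity $f_i>0$ does not follow from combining Proposition~\ref{sigma}(2) and (4) as you say---item (4) only controls the \emph{sum} $\sum_i f_i$, not the individual terms. The pointwise positivity of each $f_i$ is of course still true and standard: after writing
\[
\frac{\partial}{\partial\lambda_i}\Big(\frac{\sigma_k}{\sigma_l}\Big)
=\frac{\sigma_{k-1}(\lambda|i)\,\sigma_l(\lambda|i)-\sigma_k(\lambda|i)\,\sigma_{l-1}(\lambda|i)}{\sigma_l(\lambda)^2},
\]
it reduces to a Newton--Maclaurin inequality (Proposition~\ref{NM}) in $n-1$ variables, using that $\lambda\in\Gamma_k$ forces $(\lambda|i)\in\Gamma_{k-1}$. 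With that adjustment your sketch is complete.
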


\begin{proposition}\label{th-lem-07}
%Let $\lambda=(\lambda_1, \cdots, \lambda_n) \in \Gamma_k$  be a diagonal
Let $U$ be a diagonal matrix with $\lambda[U]\in \Gamma_k$, $0\leq l \leq k-2$ and $k\geq 3$. Then
\begin{equation*}
-F^{1i, i1}(U)=\frac{F^{11}-F^{ii}}{U_{ii}-U_{11}},\quad \forall~i\geq2.
\end{equation*}
\end{proposition}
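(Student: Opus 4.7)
The plan is to recognize that $F(U) = f(\lambda[U])$ for the symmetric function $f = (\sigma_k/\sigma_l)^{1/(k-l)}$, and then invoke the classical formula for second derivatives of a $GL$-invariant (hence symmetric-function-of-eigenvalues) operator evaluated at a diagonal point. This formula, going back to Ball and used extensively by Andrews, Caffarelli--Nirenberg--Spruck, and Gerhardt, states that for a symmetric $f$ and a diagonal symmetric matrix $U = \mathrm{diag}(\lambda_1,\dots,\lambda_n)$, one has $F^{ij}(U) = f_i \delta_{ij}$, $F^{ii,jj}(U) = f_{ij}$, the off-diagonal cross terms $F^{ij,ji}(U) = \frac{f_i - f_j}{\lambda_i - \lambda_j}$ for $i \neq j$ (with the understanding that the right-hand side is a limit when $\lambda_i = \lambda_j$), and all other second partials vanish. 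Setting the first index to $1$ and the second to $i \geq 2$, and using $F^{ii} = f_i$ and $U_{ii} = \lambda_i$, the required identity follows immediately:
\[
-F^{1i,i1}(U) \;=\; -\frac{f_1 - f_i}{\lambda_1 - \lambda_i} \;=\; \frac{F^{11} - F^{ii}}{U_{ii} - U_{11}}.
\]

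The one substantive ingredient is the derivation of the off-diagonal formula $F^{ij,ji}(U) = (f_i - f_j)/(\lambda_i - \lambda_j)$. The clean route is a perturbation argument: consider the one-parameter family $U(t) = U + tE$ with $E$ symmetric, and use standard eigenvalue perturbation theory at a diagonal matrix with distinct eigenvalues, which yields
\[
\lambda_i(t) = \lambda_i + tE_{ii} + t^2 \sum_{j \neq i} \frac{E_{ij}^2}{\lambda_i - \lambda_j} + O(t^3).
\]
Substituting into $F(U+tE) = f(\lambda(t))$, computing $\frac{d^2}{dt^2}F(U+tE)\big|_{t=0}$ and symmetrizing in $i,j$ produces a quadratic form in $E$ whose $E_{ij}^2$ coefficient (for $i \neq j$) is $(f_i - f_j)/(\lambda_i - \lambda_j)$. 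Matching this with $\sum F^{ab,cd}(U) E_{ab} E_{cd}$ and accounting for the index symmetries $F^{ab,cd} = F^{ba,cd} = F^{ab,dc}$ identifies $F^{ij,ji}(U)$ as claimed.

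The ellipticity of $F$ on $\{\lambda[U] \in \Gamma_k\}$ from the preceding proposition ensures that $F$ is smooth on this open set, so the formulas apply and extend to the case $\lambda_i = \lambda_j$ by continuity (with the right-hand side interpreted via L'Hôpital). The main conceptual obstacle is just the perturbation formula for $\lambda_i(t)$ to second order, which is standard; once it is in hand the identity in the statement is immediate. Indeed, as the authors note, full details appear in their earlier work \cite{Chen20}, so the present proof amounts to citing this formula and specializing it.
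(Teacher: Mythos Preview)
Your proposal is correct and matches the paper's approach: the paper gives no argument here but simply refers to \cite{Chen20}, and the identity there is precisely the standard second-derivative formula for a symmetric function of eigenvalues evaluated at a diagonal matrix that you cite (as in Andrews, Gerhardt, or Ball). Your perturbation sketch of $F^{ij,ji}(U)=(f_i-f_j)/(\lambda_i-\lambda_j)$ and the sign bookkeeping leading to $-F^{1i,i1}(U)=(F^{11}-F^{ii})/(U_{ii}-U_{11})$ are exactly what underlies the cited result, so you have in fact supplied more detail than the paper itself.
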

%%%%%%%%%%%%%%%%%%%%%%%%%%%%%%%%%%%%%%%%%%%%%%%%%%%%%%%%%%%%%%%%%%%%%%%%%%%%%%%%%%%%%%%%%%%%%%%%%%%%%%%%%%%%%%%%%%%%%%%%%%%%%%%%%%%%%%%%%%%%%%%%%%%%%%%%%%%%%%%%%%%%
\section{$C^1$ Estimates}
%%%%%%%%%%%%%%%%%%%%%%%%%%%%%%%%%%%%%%%%%%%%%%%%%%%%%%%%%%%%%%%%%%%%%%%%%%%%%%%%%%%%%%%%%%%%%%%%%%%%%%%%%%%%%%%%%%%%%%%%%%%%%%%%%%%%%%%%%%%%%%%%%%%%%%%%%%%%%%%%%%%%

In this section, we consider the lower and upper bounds, gradient estimates for the admissible solution to equation \eqref{Eq}.

\begin{lemma}\label{C0}
Let $u\in C^{\infty}(\overline{M})$ be an admissible solution for equation \eqref{Eq}.
Under the assumptions mentioned  in Theorem \ref{main},  then there exists a positive constant $C$ depending only on $ \sup_{\partial M}\varphi$ and the subsolution $\underline{u}$ such that
 $$\sup_{x \in \overline{M}} |u(x)|\leq C.$$
 \end{lemma}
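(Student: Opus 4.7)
The plan is to obtain the upper bound directly from the admissibility condition and the lower bound by comparison with the subsolution $\underline{u}$.

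For the upper bound I would exploit the fact that every admissible $u$ is automatically strictly subharmonic. In an orthonormal frame a direct computation gives $\sigma_1(\lambda[U])=\tr(U)=(n\tau-1)\Delta u$, and the condition $\lambda[U]\in\Gamma_k\subset\Gamma_1$ together with $n\ge 3$ and $\tau\ge 1$ forces $\Delta u>0$ throughout $M$. The classical maximum principle then yields $\sup_{\overline{M}}u\le \sup_{\partial M}\varphi$ immediately.

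For the lower bound I would use a comparison argument. Set $w=u-\underline{u}$; since $w\equiv 0$ on $\partial M$, it suffices to rule out a strictly negative interior minimum. Suppose for contradiction that $w$ attains such a minimum at some $x_0\in M$. Then $\nabla u(x_0)=\nabla\underline{u}(x_0)$ and $\nabla^2 u(x_0)\ge \nabla^2\underline{u}(x_0)$. Rewriting the equation as $Q(\nabla^2 u)=\psi(x,u,\nabla u)$ with $Q=F\circ T$, $T(A)=\tau\tr(A)I-A$, the aim is to apply monotonicity of $Q$ in its matrix argument to obtain
\begin{equation*}
\psi(x_0,u(x_0),\nabla u(x_0))=Q(\nabla^2 u(x_0))\ge Q(\nabla^2\underline{u}(x_0))\ge \psi(x_0,\underline{u}(x_0),\nabla\underline{u}(x_0)).
\end{equation*}
Combined with $u(x_0)<\underline{u}(x_0)$, $\nabla u(x_0)=\nabla\underline{u}(x_0)$ and $\psi_z>0$, the right-hand side strictly exceeds the left, a contradiction. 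Hence $u\ge \underline{u}$ throughout $\overline{M}$, giving $u\ge \min_{\overline{M}}\underline{u}$.

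The only step requiring genuine verification is the monotonicity of $Q$ along the segment from $\nabla^2\underline{u}(x_0)$ to $\nabla^2 u(x_0)$. From $Q^{ij}=\tau\sum_\ell F^{\ell\ell}\delta_{ij}-F^{ij}$ the eigenvalues of $(Q^{ij})$ are $\tau\sum_\ell\lambda_\ell^F-\lambda_i^F$, which are strictly positive on the admissible set since each $\lambda_i^F>0$ there and $\tau\ge 1$, $n\ge 2$. Convexity of the admissible set for $Q$ follows because it is the preimage under the linear map $T$ of the convex Garding cone for symmetric matrices; hence the entire connecting segment lies in the ellipticity region of $Q$, and integrating $Q^{ij}\partial_{ij}$ along it yields the required inequality $Q(\nabla^2 u(x_0))\ge Q(\nabla^2\underline{u}(x_0))$. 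With this routine check in place, the lemma follows, with the $C^0$ bound depending only on $\sup_{\partial M}\varphi$ and $\|\underline{u}\|_{C^0}$.
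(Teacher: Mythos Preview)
Your proof is correct and follows essentially the same route as the paper: the upper bound via $\Delta u>0$ and the maximum principle, and the lower bound via comparison with $\underline{u}$ using $\psi_z>0$. The paper simply invokes the comparison principle as a black box, whereas you have spelled out its proof (minimum-point argument plus ellipticity of $Q$ along the connecting segment), but the content is the same.
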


\begin{proof}
On the one hand, according to  Definition \ref{def-1}, it is easy to see that
 $\lambda[U]\in\Gamma_k\subset\Gamma_1$, which implies that $tr(\lambda[U])=(\tau n-1)\Delta u>0$.
Combined with the maximum principle, we have
$$\sup_{\overline{M}}u\leq\sup_{\partial M}\varphi.$$
On the other hand, we know that  there exists an admissible subsolution $\underline{u}\in C^2(\overline{M})$ satisfying \eqref{Eq-sub}.
By the fact $\psi_z>0$ and the comparison principle,
$$u\geq\underline{u}, \quad \forall~x \in\overline{M}.$$
\end{proof}

\begin{lemma}\label{C1-0}
Let  $l+2\leq k\leq n$, $\varphi \in C^{\infty}(\partial M)$,  $\psi\in C^{\infty}(\overline{M}\times \mathbb{R}\times T_xM)$ with $\psi, \psi_z>0$. If $u\in C^2(\overline{M})$ is the solution of equation \eqref{Eq},
then
 $$\sup_{M}|\nabla u|\leq C(1+\sup_{\partial M}|\nabla u|),$$
where $C$ is a constant depending on $n,k,l,\|u\|_{C^0},\|\psi\|_{C^1}$ and the curvature tensor $R$.
\end{lemma}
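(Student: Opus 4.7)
I would attempt a Bernstein-type maximum-principle argument. Let $\phi\in C^2(\mathbb{R})$ be a one-variable function of $u$ to be chosen later in terms of $n,k,l,\|u\|_{C^0},\|\psi\|_{C^1}$ and the curvature bound, and consider the auxiliary function $H(x) = \log|\nabla u|^2 + \phi(u)$ on $\{x\in\overline{M}:|\nabla u(x)|\neq 0\}$. If $H$ attains its maximum over $\overline{M}$ on $\partial M$, exponentiating gives the claimed bound immediately. Otherwise, pick an interior maximum $x_0 \in M$ and work in a local orthonormal frame at $x_0$. The first-order condition $\nabla_i H(x_0)=0$ yields the critical identity
\[
u_k u_{ki}(x_0) \,=\, -\tfrac{1}{2}\phi'(u)\,|\nabla u|^2\, u_i,
\]
pinning $\nabla u$ as an eigenvector of $\nabla^2 u$, while the second-order condition $Q^{ij}\nabla_{ij}H(x_0)\le 0$ follows from the positive-definiteness of $Q^{ij}$ guaranteed by Proposition~\ref{ellipticconcave}.

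\textbf{Main computation.} Expanding $Q^{ij}\nabla_{ij}H$, commuting $\nabla_{ij}(\nabla_k u)$ with $\nabla_k u_{ij}$ using the Ricci identities \eqref{req1}--\eqref{req0}, and substituting the differentiated equation \eqref{Quiik},
\[
Q^{ij}\nabla_k u_{ij} \,=\, \psi_k+\psi_z u_k+\psi_{p_i}u_{ki},
\]
together with \eqref{Quii} and the first-order identity to eliminate $u_k u_{ki}$, one arrives at a scalar inequality at $x_0$ of the schematic form
\[
\tfrac{2Q^{ij}u_{ki}u_{kj}}{|\nabla u|^2}+2\psi_z+\phi'\psi+\bigl(\phi''-(\phi')^2\bigr)Q^{ij}u_iu_j \;\le\; \phi'\,\psi_{p_i}u_i+C\sum_i Q^{ii}+\frac{C}{|\nabla u|},
\]
where the $O(\sum_i Q^{ii})$ estimates the curvature contribution $R^m_{ikj}u_ku_mQ^{ij}/|\nabla u|^2$ using the bound on $R$, and the $C/|\nabla u|$ absorbs $u_k\psi_k/|\nabla u|^2$.

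\textbf{Closing the estimate.} Proposition~\ref{ellipticconcave} gives $\sum_i Q^{ii} \ge (n\tau-1)(C_n^k/C_n^l)^{1/(k-l)}>0$, while the algebraic identity
\[
Q^{ij}u_i u_j \,=\, \tau\Bigl(\textstyle\sum_l F^{ll}\Bigr)|\nabla u|^2 - F^{ij}u_i u_j \;\ge\; (\tau-1)\Bigl(\textstyle\sum_l F^{ll}\Bigr)|\nabla u|^2
\]
yields a strict, quadratic-in-$|\nabla u|$ lower bound on $Q^{ij}u_iu_j$ when $\tau>1$. Choosing $\phi$ so that $\phi''-(\phi')^2>0$ is large on the range of $u$ (for example $\phi(u) = \beta(u-\inf_{\overline{M}} u)^2$ with $\beta$ sufficiently small that $\phi''>(\phi')^2$), the fourth LHS term produces a positive quantity of order $|\nabla u|^2$ which, together with $\phi'\psi+2\psi_z$, dominates the linear-in-$|\nabla u|$ term $\phi'\psi_{p_i}u_i$ and the curvature/ellipticity term on the RHS, forcing $|\nabla u(x_0)|\le C$.

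\textbf{Main obstacle.} The chief difficulty is the coupling $\phi'\psi_{p_i}u_i$ on the RHS, which originates from $\psi_{p_i}u_{ki}$ in \eqref{Quiik} and is linear in $|\nabla u|$ after the first-order identity is invoked. Without a concavity assumption on $\psi$ in $p$ (the hypothesis used by Guan--Jiao \cite{Guan15}), controlling it rests entirely on the strict quadratic lower bound on $Q^{ij}u_iu_j$ above, which degenerates in the borderline case $\tau = 1$. That case must be handled by a separate argument that exploits the non-negativity of $Q^{ij}u_{ki}u_{kj}$ together with the pinning of $\nabla u$ as an eigenvector of $\nabla^2 u$ to recover a definite positive contribution.
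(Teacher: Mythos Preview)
Your Bernstein-type argument is essentially the same scheme the paper uses, and the computation up to the schematic inequality is correct. The genuine gap is exactly the one you flag in your ``Main obstacle'' paragraph: your lower bound
\[
Q^{ij}u_iu_j \;\ge\; (\tau-1)\Bigl(\textstyle\sum_l F^{ll}\Bigr)|\nabla u|^2
\]
vanishes at $\tau=1$, and you do not actually supply the ``separate argument'' you say is needed. Since the statement of the lemma covers $\tau\ge 1$, this leaves the proof incomplete.

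The paper closes this gap not by a case split but by sharpening the lower bound on $Q^{ij}u_iu_j$ itself, using precisely the eigenvector pinning you already noticed. Diagonalize $\nabla^2 u$ at $x_0$ and set $\mathcal S=\{i:u_i\neq 0\}$. The first-order condition (in the paper's normalization, with $v=1+\tfrac12|\nabla u|^2$) forces $u_{ii}=-v\phi'<0$ for every $i\in\mathcal S$. Since $\lambda[U]\in\Gamma_k\subset\Gamma_1$ gives $\Delta u>0$, there is some index $j$ with $u_{jj}>0$, hence
\[
U_{jj}=\tau\Delta u-u_{jj}<\tau\Delta u<\tau\Delta u-u_{ii}=U_{ii}\qquad(i\in\mathcal S).
\]
By the monotonicity of $F^{ii}$ in the eigenvalues, $F^{ii}\le F^{jj}$, so $F^{ii}$ is never the largest among the $F^{ll}$ for $i\in\mathcal S$. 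This yields $\sum_{l\ne i}F^{ll}\ge F^{ii}$, i.e.\ $\sum_{l\ne i}F^{ll}\ge\tfrac12\sum_l F^{ll}$, and therefore, for all $\tau\ge 1$,
\[
Q^{ii}=\tau\sum_l F^{ll}-F^{ii}\;\ge\;\sum_{l\ne i}F^{ll}\;\ge\;\tfrac12\sum_l F^{ll}\qquad(i\in\mathcal S).
\]
Summing against $u_i^2$ gives $Q^{ij}u_iu_j=\sum_{i\in\mathcal S}Q^{ii}u_i^2\ge\tfrac12(\sum_l F^{ll})|\nabla u|^2$, uniformly in $\tau\ge 1$. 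With this in hand your closing paragraph goes through verbatim: the $(\phi''-(\phi')^2)Q^{ij}u_iu_j$ term is of order $(\sum_l F^{ll})|\nabla u|^2$, which dominates the $\phi'\psi_{p_i}u_i$ term (order $|\nabla u|$) and the curvature term (order $\sum_l F^{ll}$), forcing $|\nabla u(x_0)|\le C$. Note that for this argument one wants $\phi'>0$ (to get $u_{ii}<0$), so a choice like $\phi(u)=e^{\alpha u}$ or a shifted quadratic with $\phi'>0$ on the range of $u$ is preferable to your $\beta(u-\inf u)^2$, which has $\phi'=0$ at the infimum.
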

\begin{proof}
Consider the auxiliary function
$$P(x)=ve^{\phi(u)},$$
where $v=1+\frac{1}{2}|\nabla u|^2$, $\phi(u): \mathbb{R}\longrightarrow \mathbb{R}$ is a function satisfying
$$\phi'(u)>0, \quad \phi''(u)-(\phi'(u))^2\geq
\varepsilon$$ for some positive constant $\varepsilon$ depending on $\|u\|_{C^0}$.

Suppose that $P$ attains its maximum at $x_0\in M$. By rotating the coordinates, we diagonal the matrix $\nabla^2u$. In the following, we write simply $u_i=\nabla_iu$, $u_{ij}=\nabla_{ij}u$ and $u_{ijk}=\nabla_ku_{ij}$, then at $x_0$,
\begin{equation}\label{Pi}
  0=P_i=(u_{ii}u_i+v\phi'u_i)e^{\phi(u)},
\end{equation}
and
\begin{eqnarray}\label{Pii}
% \nonumber to remove numbering (before each equation)
  0\geq P_{ii}=\left(u_{ii}^2+u_ku_{kii}+2u_i^2u_{ii}\phi'+u_i^2v\left(\phi''+(\phi')^2\right)+v\phi'u_{ii}\right)
  e^{\phi (u)}.
\end{eqnarray}
 We assume that $v\leq |\nabla u|^2$, i.e., $|\nabla u|^2\geq 2$. Otherwise our result holds.
Let
$$\mathcal{S}=\{i\in (1, \cdots, n) \mid u_i \neq 0\}.$$
Obviously $\mathcal{S}\neq \emptyset$ and   we derive
$$u_{ii}=-v\phi^{\prime}<0 , \quad i \in \mathcal{S}$$
by \eqref{Pi}. From the mean curvature $H > 0$, we have
$$Q^{ii}\geq\sum_{l\neq i} F^{ll}\geq \frac{1}{2} \sum_{l} F^{ll},$$
which implies
\begin{eqnarray}\label{Qii}
\nonumber Q^{ii}u_i^2&=&\sum_{i\in S} Q^{ii}u_i^2\geq \sum_{i\in S} \left(\frac{1}{2} \sum_{l} F^{ll}\right)u_i^2\\
 &=& \left(\frac{1}{2} \sum_{l} F^{ll}\right) |\nabla u|^2=\frac{1}{2(\tau n-1)}\left( \sum_{l} Q^{ll}\right) |\nabla u|^2.
\end{eqnarray}
 Since $Q^{ii}\geq0$ and by Ricci identity, we have $u_{kij}=u_{ijk}+R^l_{jki}u_l$, then
\begin{eqnarray}\label{c1eq}
% \nonumber to remove numbering (before each equation)
 \nonumber 0&\geq& Q^{ii}\left(u_ku_{kii}+2u_i^2u_{ii}\phi'+u_i^2v\left(\phi''+(\phi')^2\right)+v\phi'u_{ii}\right)\\
 %\nonumber &=&Q^{ii}u_ku_{iik}+R^l_{iki}Q^{ii}u_ku_l+2Q^{ii}u_i^2u_{ii}\phi'+v\left(\phi''+(\phi')^2\right)Q^{ii}u_i^2
 %+v\phi'Q^{ii}u_{ii}\\
   \nonumber &=& \psi_ku_k+\psi_zu_k^2+\psi_{p_k}u_ku_{kk}+R^l_{iki}Q^{ii}u_ku_l+2\phi'Q^{ii}u_i^2u_{ii}\\
   \nonumber&&+v\left(\phi''+(\phi')^2\right)Q^{ii}u_{i}^2
    +v\phi'Q^{ii}u_{ii}\\
    \nonumber&\geq& \psi_ku_k-v\phi'\psi_{p_k}u_k+R^l_{iki}Q^{ii}u_ku_l+v\left(\phi''-(\phi')^2\right)Q^{ii}u_{i}^2+v\phi'\psi\\
    &\geq&\sum_lQ^{ll}\left(\frac{\varepsilon}{4(\tau n-1)}|\nabla u|^4-\overline{C}|\nabla u|^2\right)-
    \overline{C}\phi'|\nabla u|^3-\overline{C}\phi'|\nabla u|^2-\overline{C}|\nabla u|,
\end{eqnarray}
where $\overline{C}$ is a constant depending on $\|\psi\|_{C^1}$ and the curvature tensor $R$.

On the one hand, if $\frac{\varepsilon}{4(\tau n-1)}|\nabla u|^4-\overline{C}|\nabla u|^2\leq0$, then $|\nabla u|\leq C$.  Otherwise by \eqref{c1eq} and the fact
$\sum_lQ^{ll}\geq(\tau n-1)\left(\frac{C_n^k}{C_n^l}\right)^{\frac{1}{k-l}}$, we derive
 $$(\tau n-1)\left(\frac{C_n^k}{C_n^l}\right)^{\frac{1}{k-l}}\left(\frac{\varepsilon}{4(\tau n-1)}|\nabla u|^4-\overline{C}|\nabla u|^2\right)-
    \overline{C}\phi'|\nabla u|^3-\overline{C}\phi'|\nabla u|^2-\overline{C}|\nabla u|\leq0,$$
 then $|\nabla u|\leq C$, the lemma is proved.
\end{proof}

Next, we derive the global $C^1$ estimates for the solution of equation \eqref{Eq}.

\begin{theorem}\label{C1}
Let $u\in C^{\infty}(\overline{M})$ be an admissible solution for equation \eqref{Eq}.
Under the assumptions mentioned  in Theorem \ref{main}, then
 $$\sup_{\overline M}|\nabla u|\leq C,$$
where $C$ is a constant depending on $n, k, l$, $\|u\|_{C^0}$, $\|\underline{u}\|_{C^1}$, $\|\varphi\|_{C^1}$, $\|\psi\|_{C^1}$ and the curvature tensor $R$.
\end{theorem}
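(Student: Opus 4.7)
The plan is to combine the interior-by-boundary estimate of Lemma \ref{C1-0} with a separate boundary gradient estimate. Since Lemma \ref{C1-0} gives
$$\sup_{M} |\nabla u| \leq C\bigl(1 + \sup_{\partial M} |\nabla u|\bigr),$$
it suffices to prove $\sup_{\partial M} |\nabla u| \leq C$. At any $x_0 \in \partial M$ I will split $\nabla u$ into a tangential part and an inner normal part and estimate each separately.

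The tangential part is immediate from the Dirichlet condition: for every $T \in T_{x_0}(\partial M)$ one has $\nabla_T u(x_0) = \nabla_T \varphi(x_0)$, so the tangential gradient is controlled by $\|\varphi\|_{C^1(\partial M)}$. The normal derivative $\partial_\nu u$, with $\nu$ the inward unit normal, will be bounded from both sides by barriers constructed with no geometric assumption on $\partial M$.

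For the lower bound on $\partial_\nu u(x_0)$ I use the subsolution as a barrier from below. Because $\psi_z > 0$, the comparison argument already employed in Lemma \ref{C0} gives $u \geq \underline{u}$ on $\overline{M}$, and equality holds on $\partial M$ since both equal $\varphi$ there. Hence $u - \underline{u} \geq 0$ in $\overline{M}$ with vanishing boundary trace, so at $x_0$ one has $\partial_\nu(u - \underline{u})(x_0) \geq 0$, i.e.\ $\partial_\nu u(x_0) \geq \partial_\nu \underline{u}(x_0)$, which is controlled by $\|\underline{u}\|_{C^1}$. For the upper bound I exploit subharmonicity: since $\lambda[U] \in \Gamma_k \subset \Gamma_1$, we have $(\tau n - 1)\Delta u = \sigma_1(\lambda[U]) > 0$, so $u$ is subharmonic with respect to $\Delta_g$. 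Letting $h \in C^{\infty}(\overline{M})$ solve the linear Dirichlet problem $\Delta h = 0$ in $M$ with $h = \varphi$ on $\partial M$, the weak maximum principle applied to $u - h$ yields $u \leq h$ in $\overline{M}$, whence $\partial_\nu u(x_0) \leq \partial_\nu h(x_0)$, and $\|h\|_{C^1(\overline{M})}$ is controlled by $\|\varphi\|_{C^1}$ together with the geometry of $(M,g)$ via standard linear elliptic theory.

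Combining the three estimates yields $\sup_{\partial M} |\nabla u| \leq C$, and feeding this back into Lemma \ref{C1-0} produces the stated global bound. The key feature of the argument is that neither barrier uses any curvature or convexity hypothesis on $\partial M$: the admissible subsolution supplies the lower barrier for free, while the cone condition $\lambda[U] \in \Gamma_1$ alone provides the subharmonicity needed for the upper barrier. I expect the upper barrier step to be the conceptually delicate one, but it is handled cleanly here by reducing to the linear Laplace problem on $M$.
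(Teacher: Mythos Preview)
Your proposal is correct and follows essentially the same route as the paper: reduce via Lemma \ref{C1-0} to a boundary gradient bound, then sandwich $u$ between the subsolution $\underline{u}$ (lower barrier, from the comparison principle) and the harmonic extension $h$ of $\varphi$ (upper barrier, from $\Delta u>0$) to control the normal derivative on $\partial M$. The only cosmetic differences are your explicit mention of the tangential part and your use of the inward rather than the exterior normal.
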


\begin{proof}

From Lemma \ref{C1-0}, we are left with the task of estimating the exterior normal derivative of $u$ on $\partial M$. Let $h$ be the harmonic function in $M$ which equals $\varphi$ on $\partial M$, then we derive
\begin{equation*}
\left\{
\begin{aligned}
&\Delta (u-h)>0 &&in~
M,\\
&u-h=0 &&on~\partial M.
\end{aligned}
\right.
\end{equation*}
The maximum principle implies $u\leq h$ in $M$. Therefore,
$$\underline{u}\leq u\leq h \quad \mbox{in}~M.$$
Since they are all equal to $\varphi$ on $\partial M$, then
$$\nabla_{\nu}h\leq\nabla_{\nu}u\leq\nabla_{\nu}\underline{u} \quad \mbox{on}~\partial M,$$
where $\nu$ is the exterior normal derivative of $u$ on $\partial M$.
Thus, we have
$$\sup_{\partial M}|\nabla u|\leq C,$$
which completes the proof.
\end{proof}

%%%%%%%%%%%%%%%%%%%%%%%%%%%%%%%%%%%%%%%%%%%%%%%%%%%%%%%%%%%%%%%%%%%%%%%%%%%%%%%%%%%%%%%%%%%%%%%%%%%%%%%%%%%%%%%%%%%%%%%%%%%%%%%%%%%%%%%%%%%%%%%%%%%%%%%%%%%%%%%%%%%%
\section{Global Estimates for second derivatives}
%%%%%%%%%%%%%%%%%%%%%%%%%%%%%%%%%%%%%%%%%%%%%%%%%%%%%%%%%%%%%%%%%%%%%%%%%%%%%%%%%%%%%%%%%%%%%%%%%%%%%%%%%%%%%%%%%%%%%%%%%%%%%%%%%%%%%%%%%%%%%%%%%%%%%%%%%%%%%%%%%%%%
In this section, we prove the global  second order estimates and give the proof of Theorem \ref{main} and \ref{main-1}. Firstly, we need to derive the following theorem.

\begin{theorem}\label{C2-0}
Let $u\in C^{\infty}(M)$ be an admissible solution for equation \eqref{Eq}.  Then
there exists a constant $C$ depending only on $n, k, l, \|u\|_{C^1}, \|\underline{u}\|_{C^2}, \|\psi\|_{C^2}$ and the curvature tensor
$R$
 such that
$$\sup_{M} |\nabla^2 u | \leq C(1+\sup_{\partial M}|\nabla^2 u |).$$
\end{theorem}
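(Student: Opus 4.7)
The plan is to apply the maximum principle to the test function
$$W(x,\xi)=\log u_{\xi\xi}+\phi\bigl(\tfrac12|\nabla u|^2\bigr)+N(\underline u-u),\qquad(x,\xi)\in\overline M\times\{|\xi|=1\},$$
where $\phi$ is a smooth, increasing, strictly convex function (say $\phi(s)=-\tfrac12\log(2A-s)$ with $2A>\sup|\nabla u|^2$, which satisfies $\phi''\ge(\phi')^2+1$), and $N\gg 1$ is a large constant to be chosen. By Lemma~\ref{C0} and Theorem~\ref{C1} the last two summands of $W$ are bounded, so it suffices to bound $W$. If $W$ attains its maximum on $\partial M$ the conclusion of Theorem~\ref{C2-0} is immediate; otherwise I would argue at an interior maximum point $x_0$ in the direction $\xi_0$. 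Rotating so that $e_1=\xi_0$ and $\nabla^2u$ is diagonal at $x_0$ with $u_{11}=\mu_1\ge\mu_2\ge\cdots\ge\mu_n$, the matrix $U=\tau(\Delta u)g-\nabla^2u$ is also diagonal, with eigenvalues $\lambda_i=\tau\Delta u-\mu_i$ ordered $\lambda_1\le\cdots\le\lambda_n$, and $F^{ij},Q^{ij}$ are diagonal with $F^{11}$ the largest by Proposition~\ref{sigma}(6).

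The critical-point conditions $W_i=0$ read $u_{11i}/u_{11}=-\phi'u_i\mu_i-N(\underline u_i-u_i)$, and $Q^{ii}W_{ii}\le 0$ expands (using the diagonalization $u_{ki}=\mu_i\delta_{ki}$) to
$$0\ge\tfrac{Q^{ii}u_{11ii}}{u_{11}}-\tfrac{Q^{ii}u_{11i}^2}{u_{11}^2}+\phi''Q^{ii}u_i^2\mu_i^2+\phi'\bigl(Q^{ii}\mu_i^2+Q^{ii}u_ku_{kii}\bigr)+NQ^{ii}(\underline u_{ii}-u_{ii}).$$
Using \eqref{Quiik} together with the Ricci identity \eqref{req1}, the term $\phi'Q^{ii}u_ku_{kii}$ is rewritten as an explicit combination of $\psi$-derivatives plus $O(\sum Q^{ii})$. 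The commutator \eqref{req4} gives $Q^{ii}u_{11ii}=Q^{ii}u_{ii11}+O\bigl(u_{11}\sum Q^{ii}\bigr)$, and differentiating \eqref{FU} twice in the $e_1$-direction rewrites $Q^{ii}u_{ii11}=F^{ij}\nabla_{11}U_{ij}=-F^{ij,rs}\nabla_1U_{ij}\nabla_1U_{rs}+\psi_zu_{11}+\psi_{p_m}u_{m11}+O(u_{11})$. Proposition~\ref{th-lem-07} bounds the concavity term from below by $2\sum_{i\ge 2}\tfrac{F^{11}-F^{ii}}{U_{ii}-U_{11}}(\nabla_1U_{1i})^2$, and since $\nabla_1U_{1i}=-u_{1i1}=-u_{11i}+O(|\nabla u|)$ for $i\ge 2$, this combines with $-\sum_{i\ge 2}Q^{ii}u_{11i}^2/u_{11}^2$ through the standard dichotomy on the indices $i$ with $\mu_i\sim\mu_1$ versus $\mu_i\ll\mu_1$ to produce a non-negative net contribution; the $i=1$ term in $-Q^{ii}u_{11i}^2/u_{11}^2$ is controlled using the critical-point equation for $i=1$.

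The subsolution term is handled by a Guan-type dichotomy: either $\sum Q^{ii}$ is already bounded below by a large constant, or $Q^{ii}(\underline u_{ii}-u_{ii})\ge\epsilon_0(1+\sum Q^{ii})$ for some $\epsilon_0>0$ depending only on $\underline u$ and the cone $\Gamma_k$, so $NQ^{ii}(\underline u_{ii}-u_{ii})$ becomes the dominant positive term once $N$ is large. The principal obstacle is the gradient-dependent piece $\psi_{p_m}u_{m11}/u_{11}$, for which no a priori sign is available in the absence of concavity of $\psi$ in $\nabla u$; this is precisely the hypothesis I aim to drop. To handle it I would use \eqref{req1} to write $u_{m11}=u_{11m}+O(|\nabla u|)$ and invoke the first-order critical equation, obtaining the bound $|\psi_{p_m}u_{m11}|/u_{11}\le C\phi'u_{11}+C(N+1)$. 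The resulting term of order $\phi'u_{11}$ is absorbed by combining the positive contributions $\phi''Q^{ii}u_i^2\mu_i^2$ and $\phi'Q^{ii}\mu_i^2$: the choice $\phi''\ge(\phi')^2+1$, together with the lower bound $\sum Q^{ii}\ge(\tau n-1)(C_n^k/C_n^l)^{1/(k-l)}$ from Proposition~\ref{sigma}(4), yields a strictly positive coefficient of $u_{11}^2$ large enough to dominate. Collecting everything, $Q^{ii}W_{ii}\le 0$ forces $u_{11}(x_0)\le C$, which completes the proof.
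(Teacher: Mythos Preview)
Your test function and the overall strategy coincide with the paper's: both use $\log(\text{largest eigenvalue of }\nabla^2 u)$ plus a gradient term plus $N(\underline u-u)$, expand $Q^{ii}W_{ii}\le 0$ at an interior maximum, commute derivatives via \eqref{req4}, differentiate the equation twice, and invoke the concavity term through Proposition~\ref{th-lem-07}. The Guan-type subsolution dichotomy you quote is also what the paper uses. So the architecture is right.

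The genuine gap is in the sentence ``the resulting term of order $\phi'u_{11}$ is absorbed by combining the positive contributions $\phi''Q^{ii}u_i^2\mu_i^2$ and $\phi'Q^{ii}\mu_i^2$ \ldots\ together with the lower bound $\sum Q^{ii}\ge(\tau n-1)(C_n^k/C_n^l)^{1/(k-l)}$ \ldots\ yields a strictly positive coefficient of $u_{11}^2$.'' This does not follow. The only index contributing a $u_{11}^2$ to $\phi'Q^{ii}\mu_i^2$ or $\phi''Q^{ii}u_i^2\mu_i^2$ is $i=1$, and its coefficient is $Q^{11}=\tau\sum_lF^{ll}-F^{11}$, which for $\tau=1$ equals $\sum_{l\ge2}F^{ll}$ and carries no uniform positive lower bound; the lower bound you cite is for $\sum_iQ^{ii}$, not for $Q^{11}$. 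So you cannot, from the ingredients listed, manufacture a term $\gtrsim u_{11}^2$ to kill $C\phi'u_{11}$.

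The paper closes this gap by a structural two-case analysis that you have not reproduced: either $|u_{ii}|\le\delta u_{11}$ for all $i\ge2$, in which case one computes directly from the equation that $\sum_iQ^{ii}\gtrsim \psi^{1-k+l}u_{11}$ (so the subsolution term $\frac{A}{2}Q^{ii}(\underline u-u)_{ii}\ge\frac{A\theta}{2}(1+\sum_iF^{ii})$ itself grows like $Au_{11}$ and absorbs $C_0u_{11}$), or some $|u_{jj}|>\delta u_{11}$ with $j\ge2$, in which case $Q^{jj}\ge\frac{1}{n}\sum_lF^{ll}$ has a fixed lower bound and $aQ^{jj}u_{jj}^2\gtrsim u_{11}^2$ does the job. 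This dichotomy is not the Guan subsolution dichotomy; it is an additional case split specific to the operator $U=\tau(\Delta u)g-\nabla^2u$, and without it the bad $O(u_{11})$ term coming from the gradient dependence of $\psi$ (which is exactly the term whose treatment distinguishes this theorem from \cite{Guan15}) remains uncontrolled. Your ``standard dichotomy on the indices $i$ with $\mu_i\sim\mu_1$ versus $\mu_i\ll\mu_1$'' for the third-order terms likewise presupposes that the other eigenvalues are already known to be $O(1)$ relative to $u_{11}$; in the paper this is established as a separate intermediate step (their Step~3) \emph{after} the case analysis above, not assumed.
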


\begin{proof}
Taking the auxiliary function
\begin{equation*}
\widehat{H}=\log \widetilde{\lambda}_{\mbox{max}}(\nabla^2u)+\frac{a}{2}|\nabla u|^2+A(\underline{u}-u),
\end{equation*}
where $\widetilde{\lambda}_{\mbox{max}}(\nabla^2u)$ is the largest eigenvalue of $\nabla^2u$, $a\leq1$ and $A\geq1$ are constants to be determined later, $x_0$ is the maximum point of $\widehat{H}$. We choose a local orthonormal frame  $\{e_{1}, e_{2}, \cdots, e_{n}\}$ near $x_0$ such that $\nabla_{e_i}e_j=0$, i.e. $\Gamma_{ij}^k=0$ at $x_0$ for any $1\leq i,j,k\leq n$.
For convenience, we write $u_i=\nabla_iu, u_{ij}=\nabla_{ij}u, u_{ijl}=\nabla_lu_{ij}$ , $u_{ijrs}=\nabla_{rs}u_{ij}$ and $R^m_{ijs;l}=\nabla_lR^m_{ijs}$. Assume that
$$u_{11}\geq u_{22}\geq \cdots \geq u_{nn}$$
at $x_0$. Recalling that $U_{ii}=\tau\Delta u-u_{ii}$, we have
$$U_{11}\leq U_{22}\leq\cdots\leq U_{nn}.$$
It can follows that
$$
F^{11}\geq F^{22}\geq\cdots\geq F^{nn}\quad \mbox{and} \quad Q^{11}\leq Q^{22}\leq\dots\leq Q^{nn}.$$
We define a new function $\widetilde{H}$ by
\begin{equation*}
\widetilde{H}=\log u_{11}+\frac{a}{2}|\nabla u|^2+A(\underline{u}-u).
\end{equation*}
Then at $x_0$, we have
\begin{equation}\label{Hi}
  0=\widetilde{H}_i=\frac{u_{11i}}{u_{11}}+au_iu_{ii}+A(\underline{u}-u)_i,
\end{equation}
\begin{equation}\label{Hii}
  0\geq \widetilde{H}_{ii}=\frac{u_{11ii}u_{11}-u_{11i}^2}{u_{11}^2}+au_{ii}^2+au_ku_{kii}+A(\underline{u}-u)_{ii}.
\end{equation}
We divide our proof into four steps.

\textbf{Step 1}:  We show that
\begin{eqnarray}\label{ht-c2-1}
\nonumber0&\geq& - \frac{2}{u_{11}} \sum_{i\geq 2} Q^{1i, i1} u_{1i1}^2 -\frac{Q^{ii}u_{11i}^2}{u_{11}^2}
+\frac{aQ^{ii}u_{ii}^2}{2}+AQ^{ii}(\underline{u}-u)_{ii}-C_0\sum_i Q^{ii}\\
&&-\frac{C_0^2\sum_iQ^{ii}}{2au_{11}^2}-\frac{C_0\sum_iQ^{ii}}{u_{11}}-C_0u_{11}-\frac{C_0}{u_{11}}-AC_0,
\end{eqnarray}
where $C_0$ depends on $\|\psi\|_{C^2}$ , $\|u\|_{C^1}$, $\|\underline{u}\|_{C^2}$ and the curvature tensor $R$.

Since $Q^{ii}\geq0$, then by \eqref{req4} and \eqref{Hii},
\begin{eqnarray}\label{uii11}
 \nonumber0 &\geq& \frac{Q^{ii}u_{11ii}}{u_{11}}-\frac{Q^{ii}u_{11i}^2}{u_{11}^2}+aQ^{ii}u_{ii}^2+aQ^{ii}u_ku_{kii}
   +AQ^{ii}(\underline{u}-u)_{ii}\\
   \nonumber&=&\frac{Q^{ii}u_{ii11}}{u_{11}}+\frac{Q^{ii}}{u_{11}}\left(2R^1_{i1i}u_{11}+2R^i_{11i}u_{ii}+R^m_{i1i;i}u_m+
   R^m_{11i;i}u_m\right)\\
   \nonumber&&-\frac{Q^{ii}u_{11i}^2}{u_{11}^2}+aQ^{ii}u_{ii}^2+au_kQ^{ii}u_{kii}+AQ^{ii}(\underline{u}-u)_{ii}\\
   \nonumber&\geq&\frac{Q^{ii}u_{ii11}}{u_{11}}-\frac{Q^{ii}u_{11i}^2}{u_{11}^2}+aQ^{ii}u_{ii}^2+aQ^{ii}u_ku_{kii}
   +AQ^{ii}(\underline{u}-u)_{ii}\\
   &&-C_1\sum_iQ^{ii}-\frac{C_1Q^{ii}|u_{ii}|}{u_{11}}-\frac{C_1\sum_iQ^{ii}}{u_{11}},
\end{eqnarray}
where $C_1$ is a constant depending only on $\|u\|_{C^1}$ and the curvature tensor $R$.

Differentiating \eqref{FU} twice, we get
\begin{eqnarray}\label{Ff}
% \nonumber to remove numbering (before each equation)
 \nonumber Q^{ij,rs}u_{ij1}u_{rs1}+Q^{ii}u_{ii11} &=& \psi_{11}+2\psi_{1z}u_1+2\psi_{1p_1}u_{11}+\psi_{zz}u_1^2+2\psi_{zp_1}u_1u_{11} \\
  && +\psi_zu_{11}+\psi_{p_1p_1}u_{11}^2+\psi_{p_i}u_{i11}.
\end{eqnarray}
Note that
\begin{equation}\label{ijrs}
  -Q^{ij,rs}u_{ij1}u_{rs1}\geq-2\sum_{i\geq2}Q^{1i,i1}u_{1i1}^2.
\end{equation}
By \eqref{Hi}, \eqref{Ff} and \eqref{ijrs},
\begin{eqnarray}\label{ij1}
% \nonumber to remove numbering (before each equation)
  \nonumber \frac{Q^{ii}u_{ii11}}{u_{11}}
  &\geq&-\frac{1}{u_{11}}Q^{ij,rs}u_{ij1}u_{rs1}+\psi_{p_i}\frac{u_{i11}}{u_{11}}\\
  \nonumber&\geq&-\frac{2}{u_{11}}\sum_{i\geq2}Q^{1i,i1}u_{1i1}^2+\psi_{p_i}\left(-au_iu_{ii}-A\underline{u}_i+Au_i+
  \frac{R^l_{1i1}u_l}{u_{11}}\right)\\
  \nonumber&&-Cu_{11}-\frac{C}{u_{11}}-C\\
  &\geq&-\frac{2}{u_{11}}\sum_{i\geq2}Q^{1i,i1}u_{1i1}^2-a\psi_{p_i}u_iu_{ii}-C_2u_{11}-\frac{C_2}{u_{11}}-AC_2,
\end{eqnarray}
where $C_2$ is a constant depending only on $\|u\|_{C^1}$ , $\|\underline{u}\|_{C^1}$, $\|\psi\|_{C^2}$ and the curvature tensor $R$.

Using \eqref{req1} and \eqref{Quiik}, we have
\begin{eqnarray}\label{aQii}
% \nonumber to remove numbering (before each equation)
 \nonumber aQ^{ii}u_ku_{kii}&=&au_kQ^{ii}\left(u_{iik}+R^l_{iki}u_l\right)\\
  \nonumber&=&au_k(\psi_k+\psi_zu_k+\psi_{p_k}u_{kk})+aR^l_{iki}Q^{ii}u_ku_l\\
   &\geq&a\psi_{p_k}u_ku_{kk}-C_3\sum_iQ^{ii}-C_3,
\end{eqnarray}
where $C_3$ is a constant depending only on $\|\psi\|_{C^1}$ , $\|u\|_{C^1}$ and the curvature tensor $R$.
Then \eqref{ht-c2-1} can be derived by \eqref{uii11}, \eqref{ij1} and \eqref{aQii}.

\textbf{Step 2}:
There exists a positive constant $\delta<\frac{1}{n-2}$ such that
$$\frac{C_{n-1}^{k-1} (\tau-\tau(n-2)\delta)^{k-1} +(\tau-1-\tau(n-1)\delta)C_{n-1}^{k-2} (\tau+\tau(n-2)\delta)^{k-2} }{C_n^l (\tau+\tau(n-2)\delta)^l } >\frac{C_{n-1}^{k-1}}{2C_n^l}.$$
We shall show that there exists a constant $B_1=\max\left\{1,\frac{\widetilde{R}}{1-\delta(n-2)}, C_0\left(\frac{a\delta^2}{4n}\left(\frac{C_n^k}{C_n^l}\right)^\frac{1}{k-l}\right)^{-1}\right\}$ for given positive constants $\widetilde{R},\theta,\xi$ such that
$$\frac{a}{4}Q^{ii}u_{ii}^2+\frac{A}{2}Q^{ii}\left(\underline{u}-u\right)_{ii}\geq C_0u_{11},$$
if $u_{11}\geq B_1>1$ and
\begin{equation}\label{A1}
   A=\|\psi\|_{C^0}^{k-l-1}\frac{4k(\tau n-1)C_n^lC_0}{\theta(n-k+l)C_{n-1}^{k-1}}+\frac{4(\tau n-1)}{\theta}\left(\frac{6C_0^4}{1-\xi}+2C_0+\frac{C_0^2}{2a}\right).
\end{equation}

Case 1: $|u_{ii}|\leq \delta u_{11}$ for all $i\geq 2$.\\
In this case we have
$$\left(\tau-1-\tau(n-1)\delta\right)u_{11}\leq U_{11}\leq \left(\tau-1+\tau(n-1)\delta\right)u_{11},$$ $$\left(\tau-\tau(n-2)\delta\right)u_{11}\leq U_{22}\leq \cdots \leq U_{nn}\leq \left(\tau+\tau(n-2)\delta\right)u_{11}.$$
By Theorem 2.18 in \cite{Guan12}, there exist positive constants $\widetilde{R},\theta$ such that
$$F^{ii}(\underline{U}-U)_{ii}\geq\theta(1+\sum_iF^{ii}),$$
when $|\lambda[U]|\geq\widetilde{R}$. Hence, if $u_{11}\geq B_1\geq\frac{\widetilde{R}}{1-\delta(n-2)}$, then
%$$U_{ii}\geq [1-\delta(n-2)]u_{11}\geq\widetilde{R}, \quad \forall~i\geq2,$$
%which implies
$$\frac{A}{2}Q^{ii}(\underline{u}-u)_{ii}=\frac{A}{2}F^{ii}(\underline{U}-U)_{ii}\geq\frac{A\theta}{2}
\left(1+\sum_iF^{ii}\right)=\frac{A\theta}{2}
\left(1+\frac{1}{\tau n-1}\sum_iQ^{ii}\right).$$
By the definition of $Q^{ii}$, we obtain
\begin{eqnarray*}
% \nonumber to remove numbering (before each equation)
  \nonumber\sum_iQ^{ii}&=&(\tau n-1)\sum_iF^{ii} \\
 \nonumber &\geq&\frac{1}{k-l}\left(\frac{\sigma_k}{\sigma_l}\right)^{\frac{1}{k-l}-1}
  \frac{(n-k+l)\sigma_{k-1}\sigma_l-(n-l+1)\sigma_k\sigma_{l-1}}{\sigma_l^2}\\
  \nonumber&\geq&\left(\frac{\sigma_k}{\sigma_l}\right)^{\frac{1}{k-l}-1}
  \frac{\sigma_{k-1}/C_n^{k-1}}{\sigma_l/C_n^k}\\
  \nonumber&=&\frac{C_n^k}{C_n^{k-1}}\left(\frac{\sigma_k}{\sigma_l}\right)^{\frac{1}{k-l}-1}
  \frac{\sigma_{k-1}(U|1)+U_{11}\sigma_{k-2}(U|1)}{\sigma_l}\\
 \nonumber &\geq&\frac{C_n^k}{C_n^{k-1}}\psi^{1-k+l}\frac{C_{n-1}^{k-1}\left(\tau-\tau(n-2)\delta\right)^{k-1}+\left(\tau-1
  -\tau(n-1)\delta\right)C_{n-1}^{k-2}\left(\tau+\tau(n-2)\delta\right)^{k-2}}{C_n^l\left(\tau+\tau(n-2)
  \delta\right)^l}u_{11}\\
  &\geq&\psi^{1-k+l}\frac{(n-k+1)C_{n-1}^{k-1}}{2kC_n^l}u_{11},
\end{eqnarray*}
which implies that
$$\frac{A}{2}Q^{ii}(\underline{u}-u)_{ii}\geq C_0u_{11}.$$

Case 2: $u_{22} > \delta u_{11}$ or $u_{nn} <- \delta u_{11}$.\\
In this case, we have
 \begin{equation*}
\begin{aligned}
\frac{a Q^{ii} u_{ii}^2}{4}&\geq \frac{a}{4} \left(Q^{22} u_{22}^2+Q^{nn} u_{nn}^2\right)
\geq  \frac{a\delta^2}{4} Q^{22} u_{11}^2\\
&\geq  \frac{a\delta^2}{4n} \sum_i F^{ii}u^2_{11}\geq  \left(\frac{C_n^k}{C_n^l}\right)^{\frac{1}{k-l}} \frac{a\delta^2 u_{11}}{4n}u_{11}. \\
\end{aligned}
\end{equation*}
Then, we have
$$\frac{a}{4} Q^{ii} u_{ii}^2\geq C_0 u_{11},$$
if
 $$u_{11} \geq \left(\left(\frac{C_n^k}{C_n^l}\right)^{\frac{1}{k-l}} \frac{a\delta^2}{4n} \right)^{-1}C_0.$$

\textbf{Step 3}:   We show that
$$|u_{ii}|\leq C_4 A,\quad \forall~i\geq2,$$
 if  $u_{11} \geq B_1>1$, where $C_4$ is a constant depending on $n,k,l$, $\|\psi\|_{C^2}$ , $\|u\|_{C^1}$ and the curvature tensor $R$.

Combining with Step 1 and Step 2, we obtain
\begin{eqnarray}\label{ht-c2-32}
\nonumber0&\geq& - \frac{2}{u_{11}} \sum_{i\geq 2} Q^{1i, i1} u_{1i1}^2 -\frac{Q^{ii}u_{11i}^2}{u_{11}^2}
+\frac{aQ^{ii} u_{ii}^2}{4}+\frac{A}{2}Q^{ii}(\underline{u}-u)_{ii}\\
&&-C_0\sum_iQ^{ii}-\frac{C_0^2\sum_iQ^{ii}}{2au_{11}^2}-\frac{C_0}{u_{11}}\sum_iQ^{ii}-C_0\frac{1}{u_{11}}-AC_0.
\end{eqnarray}

Using \eqref{Hi} and Cauchy-Schwarz inequality, we have
\begin{equation}\label{cau}
  \frac{u_{11i}^2}{u_{11}^2}=(au_iu_{ii}+A(\underline{u}-u)_i)^2\leq2a^2u_i^2u_{ii}^2+2A^2(\underline{u}-u)_i^2.
\end{equation}
 By the concavity of $F$ and the definition of $Q^{ii}$,
 \begin{equation}\label{FQ}
   \sum_{i\geq2}Q^{1i,i1}=\sum_{i\geq2}F^{1i,i1}\leq0.
 \end{equation}
 Choosing $a\leq\min\{\frac{1}{64\sup|\nabla u|^2},1\}$. \eqref{ht-c2-32}-\eqref{FQ} imply that
 \begin{eqnarray}\label{a2}
 % \nonumber to remove numbering (before each equation)
   \nonumber0 &\geq& \left(\frac{a}{4}-2a^2u_i^2\right)Q^{ii}u_{ii}^2-2A^2Q^{ii}(\underline{u}-u)_i^2-2C_0\sum_iQ^{ii}\\
   \nonumber&&-\frac{C_0^2}{2a}\sum_iQ^{ii}-\frac{AC_0}{2}\sum_iQ^{ii}-\frac{AC_0}{2}-(A+1)C_0\\
   &\geq&\frac{a}{8}Q^{ii}u_{ii}^2-\left(2C_0+\frac{C_0^2}{2a}+\frac{AC_0}{2}\right)\sum_iQ^{ii}
   -\left(2A^2+\frac{3A}{2}+1\right)C_0,
 \end{eqnarray}
 if $u_{11}\geq B_1>1$.

 Note that
 $$Q^{ii}\geq Q^{22}\geq\frac{1}{n}\sum_iF^{ii}=\frac{1}{n(\tau n-1)}\sum_iQ^{ii}\geq\frac{1}{n}\left(\frac{C_n^k}{C_n^l}\right)^{\frac{1}{k-l}},\quad\forall~i\geq2.$$
 Thus \eqref{a2} gives that
 $$\frac{a}{8n(\tau n-1)}\left(\sum_{k\geq2}u_{kk}^2\right)\sum_iQ^{ii}\geq\left(\left(2C_0+\frac{C_0^2}{2a}+\frac{AC_0}{2}\right)
 +\frac{C_0\left(2A^2+\frac{3A}{2}+1\right)}{\tau n-1}\left(\frac{C_n^k}{C_n^l}\right)^{-\frac{1}{k-l}}\right)\sum_iQ^{ii},$$
 which implies
 $$\sum_{k\geq 2} u_{kk}^2 \leq C_4^2 A^2.$$

 \textbf{Step 4}:
We shall show that there exists a constant $C$ depending on $n, k, l$, $\|u\|_{C^1}$, $\|\underline{u}\|_{C^2}$, $\|\psi\|_{C^2}$ and the curvature tensor $R$ such that
$$u_{11}\leq C.$$
Without loss of generality, we assume that
\begin{equation}\label{ab}
  u_{11} \geq \max \left\{B, \left(\frac{64A^2|\nabla\underline{u}-\nabla u|^2}{a}\right)^{\frac{1}{2}}, \frac{C_4 A}{\xi}\right\},
\end{equation}
where $B=\max\left\{B_1,(n-2)C_4A+\widetilde{R}\right\}$ and $\xi\leq\frac{1}{2}$ is a constant.

By \eqref{Hi}, \eqref{ab} and Cauchy-Schwarz inequality,
\begin{eqnarray}\label{idn}
% \nonumber to remove numbering (before each equation)
  \nonumber\frac{Q^{11}u_{111}^2}{u_{11}^2}&=& Q^{11}\left(au_1u_{11}+A(\underline{u}_1-u_1)\right)^2 \\
  \nonumber&\leq&2a^2|\nabla u|^2Q^{11}u_{11}^2+2A^2Q^{11}(\underline{u}_1-u_1)^2\\
  &\leq&\frac{a}{16}Q^{11}u_{11}^2.
\end{eqnarray}
Combining with Step 3 and \eqref{ab}, we know that $|u_{ii}|\leq \xi u_{11}$ for any $i\geq 2$.
Thus
\begin{equation}\label{beta}
  \frac{1-\xi}{u_{11}-u_{ii}}\leq\frac{1}{u_{11}}\leq \frac{1+\xi}{u_{11}-u_{ii}}.
\end{equation}
By \eqref{beta} and Proposition \ref{th-lem-07}, we obtain
\begin{eqnarray}\label{ht-c2-42}
\nonumber\sum_{i\geq 2} \frac{Q^{ii}u_{11i}^2}{u_{11}^2}&=&\sum_{i\geq 2} \frac{Q^{ii}-Q^{11}}{u_{11}^2}u_{11i}^2 +\sum_{i\geq 2} \frac{Q^{11}u_{11i}^2}{u_{11}^2}\\
\nonumber&\leq& \frac{1+\xi}{u_{11}} \sum_{i\geq 2} \frac{Q^{ii}-Q^{11}}{u_{11}-u_{ii}}u_{11i}^2+\sum_{i\geq 2} \frac{Q^{11}u_{11i}^2}{u_{11}^2}\\
\nonumber&=&\frac{1+\xi}{u_{11}} \sum_{i\geq 2} \frac{F^{11}-F^{ii}}{U_{ii}-U_{11}}u_{11i}^2+\sum_{i\geq 2} \frac{Q^{11}u_{11i}^2}{u_{11}^2}\\
\nonumber&\leq&-\frac{3}{2u_{11}} \sum_{i\geq 2}F^{1i, i1}u_{11i}^2 +\sum_{i\geq 2} \frac{Q^{11}u_{11i}^2}{u_{11}^2}\\
&=&-\frac{3}{2u_{11}} \sum_{i\geq 2}Q^{1i, i1}u_{11i}^2 +\sum_{i\geq 2} \frac{Q^{11}u_{11i}^2}{u_{11}^2},
\end{eqnarray}
the last equality comes from the fact $Q^{1i,i1}=F^{1i,i1}$ for any $i\geq2$.

Using \eqref{Hi}, \eqref{ab} and Cauchy-Schwarz inequality, we get
\begin{eqnarray}\label{q11}
% \nonumber to remove numbering (before each equation)
  \nonumber\sum_{i\geq 2} \frac{Q^{11}u_{11i}^2}{u_{11}^2}&\leq&
  2\sum_{i\geq 2}a^2Q^{11}u_i^2u_{ii}^2+2\sum_{i\geq2}A^2Q^{11}(\underline{u}_i-u_i)^2 \\
  \nonumber&\leq& 2a^2\xi^2|\nabla u|^2Q^{11}u_{11}^2+2A^2Q^{11}|\nabla\underline{u}-\nabla u|^2\\
   &\leq&\frac{a}{16}Q^{11}u_{11}^2.
\end{eqnarray}
By Cauchy-Schwarz inequality and Ricci identity, we have
\begin{eqnarray}\label{CSR}
% \nonumber to remove numbering (before each equation)
 \nonumber -\frac{2}{u_{11}}\sum_{i\geq2}Q^{1i,i1}u_{1i1}^2 &=& -\frac{2}{u_{11}}\sum_{i\geq2}Q^{1i,i1}(u_{11i}+R^l_{1i1}u_l)^2\\
   &\geq& -\frac{3}{2u_{11}}\sum_{i\geq2}Q^{1i,i1}u_{11i}^2+\frac{6}{u_{11}}\sum_{i\geq2}Q^{1i,i1}(R^l_{1i1}u_l)^2.
\end{eqnarray}
Then \eqref{FQ}, \eqref{beta}-\eqref{CSR} and Proposition \ref{th-lem-07} imply that
\begin{eqnarray}\label{sum}
% \nonumber to remove numbering (before each equation)
  \nonumber\sum_{i\geq 2} \frac{Q^{ii}u_{11i}^2}{u_{11}^2} &\leq& -\frac{3}{2u_{11}}\sum_{i\geq2}Q^{1i,i1}u_{11i}^2 +\frac{a}{16}Q^{11}u_{11}^2 \\
  \nonumber &\leq& -\frac{2}{u_{11}}\sum_{i\geq2}Q^{1i,i1}u_{1i1}^2-\frac{6}{u_{11}}\sum_{i\geq2}Q^{1i,i1}(R^l_{1i1}u_l)^2
   +\frac{a}{16}Q^{11}u_{11}^2\\
   %\nonumber&\leq&-\frac{2}{u_{11}}\sum_{i\geq2}Q^{1i,i1}u_{1i1}^2+6C_0\sum_{i\geq2}\frac{F^{11}-F^{ii}}{U_{ii}-U_{11}}
   %+\frac{a}{8}Q^{11}u_{11}^2\\
   \nonumber&\leq&-\frac{2}{u_{11}}\sum_{i\geq2}Q^{1i,i1}u_{1i1}^2+6C_0\sum_{i\geq2}\frac{Q^{ii}-Q^{11}}{u_{11}-u_{ii}}
   +\frac{a}{16}Q^{11}u_{11}^2\\
   &\leq&-\frac{2}{u_{11}}\sum_{i\geq2}Q^{1i,i1}u_{1i1}^2+\frac{6C_0}{1-\xi}\sum_{i\geq2}(Q^{ii}-Q^{11})
   +\frac{a}{16}Q^{11}u_{11}^2,
\end{eqnarray}
if $u_{11}\geq B>1$. Note that
$$\sum_{i\geq2}(Q^{ii}-Q^{11})=\sum_iQ^{ii}-nQ^{11}\leq\sum_iQ^{ii},$$
then substituting \eqref{idn} and \eqref{sum} into \eqref{ht-c2-32}, we derive
\begin{eqnarray*}
% \nonumber to remove numbering (before each equation)
 \nonumber 0 &\geq& -\frac{6C_0^4}{1-\xi}\sum_iQ^{ii}+\frac{aQ^{ii}u_{ii}^2}{8}
+\frac{A}{2}Q^{ii}(\underline{u}_{ii}-u_{ii})
-C_0(A+1)-2C_0\sum_iQ^{ii}-\frac{C_0^2}{2a}\sum_iQ^{ii}\\
 \nonumber &\geq&\frac{A}{4}Q^{ii}(\underline{u}_{ii}-u_{ii})-\frac{6C_0^4}{1-\xi}\sum_iQ^{ii}-2C_0\sum_iQ^{ii}
  -\frac{C_0^2}{2a}\sum_iQ^{ii}+\frac{C_0}{2}u_{11}-C_0(A+1)\\
  &\geq&\frac{C_0}{2}u_{11}-C_0(A+1),
\end{eqnarray*}
if $u_{11}\geq B\geq(n-2)C_4A+\widetilde{R}$ and $A$ defined as \eqref{A1}.

It follows that
$$u_{11}\leq 2(A+1).$$
\end{proof}

Now we consider the estimates for the second order derivatives on the
boundary $\partial M$. For any fixed $x_0\in\partial M$, we can choose smooth orthonormal local frames $e_1, \cdots,e_n$ around
$x_0$ such that when restricted on $\partial M$, $e_n$ is normal to $\partial M$. For $x\in\overline{M}$,
let $\rho(x)$ and $d(x)$ denote the distances from $x$ to $x_0$ and $\partial M$ respectively,
$$\rho(x)=dist_{M}(x,x_0),\quad d(x)=dist_{M}(x,\partial M),$$
and $M_{\delta}=\{x\in M:\rho(x)<\delta\}$.
Since $\nabla_{ij}\rho^2(x_0) = 2\delta_{ij}$, we may assume $\rho$ is smooth in $M_{\delta_0}$ for fixed $\delta_0>0$ and
$$I\leq\nabla_{ij}\rho^2\leq3I\quad in~M_{\delta_0}.$$

Then we get the following important lemma, which plays key role in
our boundary estimates.
\begin{lemma}\label{LQ}
 Let
 $$L=Q^{ij}\nabla_{ij}-\psi_{p_i}\nabla_i, \quad  v=u-\underline{u}+td-\frac{N}{2}d^2,$$
 then for a positive constant $\varepsilon_0$, there exist some uniform  constants $t, \delta$ sufficiently small and $N$ sufficiently large such that
 \begin{equation*}
\left\{
\begin{aligned}
&Lv\leq-\frac{\varepsilon_0}{4}(1+\sum_i F^{ii}) &&in~
M_{\delta},\\
&v\geq0 &&on~\partial M_{\delta}.
\end{aligned}
\right.
\end{equation*}
\end{lemma}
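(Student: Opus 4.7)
The plan is to verify the two conclusions separately—the non-negativity of $v$ on $\partial M_\delta$ and the differential inequality in the interior—and then to order the choice of $N$, $t$, $\delta$ so that both hold simultaneously. For the boundary behavior, I would split $\partial M_\delta$ into $\partial M \cap \overline{M_\delta}$ and $M \cap \{\rho = \delta\}$: on the first, $u = \underline u = \varphi$ and $d = 0$, so $v = 0$, while on the second the comparison principle already used in Lemma \ref{C0} gives $u \geq \underline u$, so $v \geq d(t - \tfrac{N}{2}d)$, which is non-negative as soon as $\delta \leq t/N$.

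For the main inequality, I would decompose $Lv = L(u - \underline u) + tL(d) - \tfrac{N}{2}L(d^2)$ and estimate each piece. The concavity of $F$ combined with the subsolution condition \eqref{Eq-sub} and the identity $Q^{ij}u_{ij} = F^{ij}U_{ij}$ yield
\begin{equation*}
Q^{ij}(u - \underline u)_{ij} = F^{ij}(U_{ij} - \underline U_{ij}) \leq \psi(x, u, \nabla u) - \psi(x, \underline u, \nabla \underline u),
\end{equation*}
which is bounded by a constant in view of the $C^1$ estimates of Section 3; more importantly, the sharpened form of this inequality, namely Theorem 2.18 of \cite{Guan12} (already invoked in the proof of Theorem \ref{C2-0}), gives $L(u - \underline u) \leq -\theta(1 + \sum_i F^{ii}) + C$ whenever $|\lambda[U]| \geq \widetilde R$, for uniform constants $\widetilde R, \theta > 0$. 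The smoothness of $d$ together with $\sum_i Q^{ii} = (\tau n - 1)\sum_i F^{ii}$ gives $|L(d)| \leq C(1 + \sum_i F^{ii})$, while
\begin{equation*}
L(d^2) = 2 Q^{ij}\nabla_i d\,\nabla_j d + 2d Q^{ij}d_{ij} - 2d\psi_{p_i}d_i,
\end{equation*}
so on $M_\delta$ the last two terms are controlled by $C\delta(1 + \sum_i F^{ii})$.

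The real obstacle is extracting a useful lower bound for $Q^{ij}\nabla_i d\,\nabla_j d$. Using $Q^{ij} = \tau(\sum_l F^{ll})\delta^{ij} - F^{ij}$ and $|\nabla d| = 1$ near $\partial M$, a direct computation gives
\begin{equation*}
Q^{ij}\nabla_i d\,\nabla_j d = \tau \sum_l F^{ll} - F^{ij}\nabla_i d\,\nabla_j d \geq (\tau - 1)\sum_l F^{ll} + \sum_{l \neq l_0} F^{ll},
\end{equation*}
which is strictly positive since $\sum_l F^{ll} \geq (C_n^k/C_n^l)^{1/(k-l)}$ by Proposition \ref{ellipticconcave}. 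I would then argue by dichotomy: in the regime $|\lambda[U]| \geq \widetilde R$, the negative term $-\theta(1 + \sum_i F^{ii})$ from $L(u - \underline u)$ alone dominates, and one only needs $t$ and $N\delta$ small enough that the remaining pieces of $Lv$ cannot overwhelm it; in the complementary bounded regime, $\sum_i F^{ii}$ is bounded and the term $-NQ^{ij}\nabla_i d\,\nabla_j d$ is made to dominate by taking $N$ large. Fixing $N$ first (to handle the bounded case), then $t$ small, and finally $\delta$ smaller still (with $\delta \leq t/N$ for the boundary condition and $N\delta \ll \theta$ for the unbounded regime), yields the claimed bound $Lv \leq -\frac{\varepsilon_0}{4}(1 + \sum_i F^{ii})$.
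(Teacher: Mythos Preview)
Your boundary argument and the broad shape of your interior argument are fine, but there is a real gap in the unbounded regime $|\lambda[U]|\geq\widetilde R$. After Theorem~2.18 of \cite{Guan12} gives $L(u-\underline u)\leq -\theta(1+\sum_iF^{ii})+C$, the additive constant $C$ (coming from the drift $-\psi_{p_i}\nabla_i(u-\underline u)$, whose size is fixed by the data) still has to be absorbed. You cannot absorb it into $-\theta(1+\sum_iF^{ii})$: the constant $\theta$ is determined by the subsolution and need not exceed $C\big/(1+(C_n^k/C_n^l)^{1/(k-l)})$. Nor can you fall back on $-NQ^{ij}d_id_j$: your inequality $Q^{ij}d_id_j\geq(\tau-1)\sum_lF^{ll}+\sum_{l\neq l_0}F^{ll}$ is correct, but for $\tau=1$ the right side is $\sum_{l\neq l_0}F^{ll}$, and this degenerates to $0$ along sequences where one $F^{ii}$ dominates all the others (for instance $F=\sigma_n^{1/n}$ with one eigenvalue tending to $0^+$). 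So in your scheme neither piece swallows the stray constant.

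The paper closes exactly this gap with two devices you do not invoke. First, the dichotomy is not on $|\lambda[U]|$ but on the angle $|\nu_\mu-\nu_\lambda|$ between the gradient directions of $f$ at $\mu=\lambda[\underline U]$ and at $\lambda=\lambda[U]$ (from \cite{Guan14}). In Case~1 the directions are close, which forces \emph{every} $F^{ii}\geq\tfrac{\beta}{\sqrt n}\sum_lF^{ll}$, so $Q^{ij}d_id_j\geq c\sum_lF^{ll}$ uniformly and large $N$ wins outright. Second, in Case~2 the paper does not drop $-NQ^{ij}d_id_j$; it combines half of the subsolution term with it via concavity,
\[
\tfrac12 Q^{ij}(\underline u-u)_{ij}+NQ^{ij}d_id_j
\;\geq\;\tfrac12\Big(F\big(\underline U+2N(\tau|\nabla d|^2I-\nabla d\otimes\nabla d)\big)-\psi\Big),
\]
and then evaluates $F$ on the shifted matrix directly to obtain growth of order $N^{(k-l-1)/(k-l)}$. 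This growth in $N$ is precisely what beats the residual constant, and it is the idea your proposal is missing. (A possible repair along your lines would be to enlarge $\widetilde R$ after noting that $\sum_iF^{ii}\to\infty$ as $|\lambda|\to\infty$ on the level set $F=\psi$; but that too requires an argument you have not supplied.)
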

\begin{proof}
It is easy to see that  $v(x)=0$ for any $x\in\partial M\bigcap B_{\delta}$. Then we can choose $\delta<\frac{2t}{N}$ such that $v(x)\geq0$ for any $x\in M\cap \partial B_{\delta}$. Therefore
$$v\geq0 \quad \mbox{on}~\partial M_{\delta}.$$
 Let $\mu=\lambda[\underline{U}]$ and $\lambda=\lambda[U]$ be the eigenvalues of $\underline{U}$ and $U$ respectively.
As the result in \cite{Guan14}, denote $\nu_{\chi}:=\frac{D f(\chi)}{|Df(\chi)|}$ to be the unit normal vector to the level hypersurface $\partial \Gamma^{f(\chi)}$ for $\chi \in \Gamma$, $\Gamma$ is a symmetric open and convex cone in $\mathbb{R}^n$ with $\Gamma_n\subset\Gamma$.
Note that $\{\mu(x)\mid x\in \overline{M}\}$ is a compact subset of $\Gamma$, there exists a uniform constant $\beta\in(0, \frac{1}{2\sqrt{n}})$ such that
$$\nu_{\mu(x)} -2 \beta \mathbf{1} \in \Gamma_n, \quad \forall x\in \overline{M}.$$
We divide into two cases to estimate $Lv$.

Case 1: $|\nu_{\mu}-\nu_{\lambda}|<\beta$.

Since $\nabla_n d(x_0)=1$, $\nabla_{\alpha}d(x_0)=0$ for all $\alpha <n$, we can choose a
constant $\delta_0$ such that
\begin{eqnarray*}
\frac{1}{2}\leq |\nabla d|\leq 1, \quad  -\widetilde{C}_{1}I\leq \nabla^2d \leq \widetilde{C}_{1}I, \quad \forall x\in M_{\delta}
\end{eqnarray*}
for any $\delta < \delta_0$, where $\widetilde{C}_{1}$ depends on the geometry of $\partial M$.
Note that $\nu_{\lambda}-\beta\mathbf{1}\in\Gamma_n$, $\mathbf{1}=(1,\cdots,1)$, then
\begin{eqnarray}\label{DHQ-lem-for-1}
F^{ii}\geq\frac{\beta}{\sqrt{n}}\sum_kF^{kk},\quad \forall~1\leq i\leq n.
\end{eqnarray}
By the definition of $v$, we have
\begin{eqnarray}\label{DHQ-lem-for-11}
% \nonumber to remove numbering (before each equation)
 \nonumber Lv &=& Q^{ij}\nabla_{ij}v-\psi_{p_i}\nabla_i v \\
  \nonumber &=&Q^{ij}\left(\nabla_{ij}(u-\underline{u})+t \nabla_{ij}d - N \nabla_id \nabla_jd -Nd \nabla_{ij}d\right)\\
  \nonumber&&-\psi_{p_i}\left(\nabla_i(u-\underline{u}) +t \nabla_id - N d\nabla_id \right)\\
  &\leq&Q^{ij}\nabla_{ij}(u-\underline{u})+ (t-Nd)Q^{ij}\nabla_{ij}d-N Q^{ij}\nabla_id \nabla_jd+\widetilde{C}_{2}+\widetilde{C}_{2}t+\widetilde{C}_{2}N \delta,
\end{eqnarray}
where $\widetilde{C}_{2}$ depends on $\|\psi\|_{C^1}$, $\|u\|_{C^1}$ and $\|\underline{u}\|_{C^1}$.
By the concavity of $F$, we have
$$Q^{ij}\nabla_{ij}(u-\underline{u})=F^{ij} (\nabla_{ij}U- \nabla_{ij}\underline{U})\leq 0.$$
Thus, we have
\begin{eqnarray}\label{eq1}
Lv&\leq& (t-Nd)Q^{ij}\nabla_{ij}d-N Q^{ij}\nabla_id \nabla_jd+\widetilde{C}_{2}+\widetilde{C}_{2}t+\widetilde{C}_{2}N \delta\\
\nonumber&\leq& \widetilde{C}_{1} t\sum_i Q^{ii}+\widetilde{C}_{2}+\widetilde{C}_{2} t+ N \widetilde{C}_{1}\delta \sum_i Q^{ii}-N Q^{ij}\nabla_id \nabla_jd+\widetilde{C}_{2}N \delta.
\end{eqnarray}
By \eqref{DHQ-lem-for-1}, we have
\begin{eqnarray}\label{DHQ-lem-for-2}
Q^{ij}\nabla_id \nabla_jd=\tau \sum_l F^{ll} |\nabla d|^2- F^{ij} \nabla_id \nabla_jd\geq \frac{(n-1)\beta}{4\sqrt{n}} \sum_l F^{ll}.
\end{eqnarray}
Note that
\begin{eqnarray}\label{DHQ-lem-for-3}
\sum_l F^{ll} =\frac{1}{\tau n-1}\sum_iQ^{ii}\geq \left(\frac{C_n^k}{C_n^l}\right)^{\frac{1}{k-l}}:=\widetilde{C}_{3}.
\end{eqnarray}
Combining with \eqref{eq1}-\eqref{DHQ-lem-for-3}, we get
\begin{eqnarray*}
\nonumber Lv&\leq& \widetilde{C}_{1} t(\tau n-1) \sum_l F^{ll}+\frac{\widetilde{C}_{2}+\widetilde{C}_{2} t}{\widetilde{C}_{3}} \sum_l F^{ll}\\
\nonumber&&+ N \widetilde{C}_{1}(\tau n-1)\delta \sum_l F^{ll}-N \frac{(n-1)\beta}{4\sqrt{n}} \sum_l F^{ll}+\frac{\widetilde{C}_{2}N \delta}{\widetilde{C}_{3}}\sum_l F^{ll}\\
&\leq & -\frac{\widetilde{C}_{2}}{\widetilde{C}_{3}} \sum_{l} F^{ll},
\end{eqnarray*}
if we choose the constants $t, N, \delta$ satisfying
\begin{equation*}
\left\{
\begin{aligned}
&t\leq \frac{\widetilde{C}_{2}}{\widetilde{C}_{1} \widetilde{C}_{3}(\tau n-1)+\widetilde{C}_{2}},\\
&N\geq \frac{20\widetilde{C}_{2} \sqrt{n}}{\widetilde{C}_{3} (n-1) \beta},\\
&\delta\leq \min\{\delta_0, \frac{2t}{N}\}.
\end{aligned}
\right.
\end{equation*}

Case 2: $|\nu_{\mu}-\nu_{\lambda}|\geq\beta$.

From Lemma 2.1 in \cite{Guan14}, we know that for some uniform constant $\varepsilon_0>0$,
$$Q^{ij}\nabla_{ij}(\underline{u}-u)=F^{ij}\nabla_{ij}(\underline{U}-U)\geq f_i(\mu_i-\lambda_i)\geq\varepsilon_0(1+\sum_iF^{ii}).$$
In according to \eqref{DHQ-lem-for-11}, we have
\begin{eqnarray}\label{w1}
  \nonumber Lv &\leq&-\frac{\varepsilon_0}{2}(1+\sum_iF^{ii})- \frac{1}{2} \left( Q^{ij}\nabla_{ij}(\underline{u}-u)+2N Q^{ij}\nabla_id \nabla_jd\right)\\
  &&+ (t-Nd)\widetilde{C}_{1}(\tau n-1) \sum_l F^{ll}+\widetilde{C}_{2}+\widetilde{C}_{2}t+\widetilde{C}_{2}N \delta.
\end{eqnarray}
By the concavity of $F$,
\begin{eqnarray}\label{w2}
  \nonumber Q^{ij}\nabla_{ij}(\underline{u}-u)+2NQ^{ij}\nabla_id\nabla_jd &=& F^{ij}\nabla_{ij}(\underline{U}-U)+2NF^{ij}(\tau|Dd|^2\delta_{ij}-\nabla_id\nabla_jd) \\
 \nonumber &\geq&F\left(\nabla_{ij}\underline{U}+2N(\tau|Dd|^2\delta_{ij}-\nabla_id\nabla_jd)\right)-F(\nabla_{ij}U)\\
 \nonumber &\geq& \left(\frac{\sigma_k}{\sigma_l}\right)^{\frac{1}{k-l}}(\mu+2N\lambda[A])-\psi,
\end{eqnarray}
where $\mu=(\mu_1,\mu_2,\cdots,\mu_n)$ and $A=\left[\tau|Dd|^2\delta_{ij}-d_id_j\right]_{n\times n}$. Since  $\lambda[A]\geq \frac{1}{4}\mbox{diag}(0,1,\cdots,1)$, then we have
\begin{eqnarray}\label{w2}
  \nonumber Q^{ij}\nabla_{ij}(\underline{u}-u)+2NQ^{ij}\nabla_id\nabla_jd \geq \left(\frac{\sigma_k}{\sigma_l}\right)^{\frac{1}{k-l}}(\mu+ \overline{\lambda})-\widetilde{C}_{2},
\end{eqnarray}
where $\overline{\lambda}= \mbox{diag}(0, \frac{N}{2}, \cdots, \frac{N}{2})$.

Next, based on the range of $k$, we consider the following two conditions.

When $k=n$, since $F(\underline{U})\geq \psi(x, \underline{u}, D\underline{u})>0$, we know that $\sigma_n(\mu)\geq \widetilde{C}_{4}$, then
\begin{equation}\label{w3}
  \frac{\sigma_n}{\sigma_l}(\mu+ \overline{\lambda})\geq
   \frac{\widetilde{C}_{4} N^{n-1}}{C_n^l(\mu_{\mbox{max}}+N)^l}\geq \widetilde{C}_{5} N^{n-1-l},
\end{equation}
where $\widetilde{C}_{5}$ depends on $\inf \psi, n, k, l$ and $\|\underline{u}\|_{C^2}$.

When $3\leq k\leq n-1$, we assume that $N>|4\mu|^2+1$, then
\begin{eqnarray}\label{w4}
  \frac{\sigma_k}{\sigma_l}(\mu+ \overline{\lambda})&\geq& \frac{\sigma_k}{\sigma_l} \left(\mbox{diag}(\mu_1, \frac{N}{4}, \cdots, \frac{N}{4})\right)\\
  \nonumber &=& \frac{C_{n-1}^k (\frac{N}{4})^{k}+\mu_1 C_{n-1}^{k-1} (\frac{N}{4})^{k-1}}{C_{n-1}^l (\frac{N}{4})^{l}+\mu_1 C_{n-1}^{l-1} (\frac{N}{4})^{l-1}}\\
  \nonumber&\geq& \frac{C_{n-1}^k (\frac{N}{4})^{k}- \frac{N^{\frac{1}{2}}}{4} C_{n-1}^{k-1} (\frac{N}{4})^{k-1}}{C_{n-1}^l (\frac{N}{4})^{l}+C_{n-1}^{l-1} (\frac{N}{4})^{l}}\\
  \nonumber&\geq& \widetilde{C}_{5} N^{k-l-1}.
\end{eqnarray}
By \eqref{w1}-\eqref{w4}, we can choose $t, N, \delta$ satisfying
\begin{equation*}
\left\{
\begin{aligned}
&t\leq \min \left\{\frac{\varepsilon_0}{12 \widetilde{C}_{1}(\tau n-1)}, 1\right\},\\
&N\geq \max\left\{ \left(10\widetilde{C}_{2}\widetilde{C}_{5}^{-\frac{1}{k-l}}\right)^{\frac{k-l}{k-l-1}}, 16n\mu_{\mbox{max}}^2+1\right\},\\
&\delta\leq \min\left\{\delta_0, \frac{2t}{N}\right\}.
\end{aligned}
\right.
\end{equation*}
Thus
\begin{eqnarray*}
% \nonumber to remove numbering (before each equation)
  Lv &\leq&-\frac{\varepsilon_0}{2}(1+\sum_iF^{ii})-\frac{1}{2}\left(\widetilde{C}_{5}N^{k-l-1}\right)^{\frac{1}{k-l}}+ 3t\widetilde{C}_{1}(\tau n-1) \sum_l F^{ll}+5\widetilde{C}_{2}\\
  &\leq&-\frac{\varepsilon_0}{4}(1+\sum_iF^{ii}) .
\end{eqnarray*}
\end{proof}

\begin{theorem}\label{C2-1}
Let $u\in C^{\infty}(M)$ be an admissible solution for equation \eqref{Eq}. Under the  assumptions mentioned  in Theorem \ref{main},
there exists a constant $C$ depending only on $n, k, l, \|u\|_{C^1}, \|\underline{u}\|_{C^2}, \inf \psi$ , $\|\psi\|_{C^2}$ and the curvature tensor $R$ such that
$$\sup_{ \overline{M}} |\nabla^2 u | \leq C.$$
\end{theorem}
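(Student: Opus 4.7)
The plan is to combine the interior estimate of Theorem \ref{C2-0} with boundary second order estimates so as to bound $\sup_{\partial M}|\nabla^2 u|$. Fix $x_0\in\partial M$ and choose an orthonormal frame $e_1,\ldots,e_n$ near $x_0$ with $e_n$ the interior normal and $e_1,\ldots,e_{n-1}$ tangent to $\partial M$. The boundary estimate then decouples into three cases: the double tangential $u_{\alpha\beta}$ with $\alpha,\beta<n$, the mixed $u_{\alpha n}$ with $\alpha<n$, and the double normal $u_{nn}$.

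For the double tangential case, since $u-\underline{u}\equiv 0$ on $\partial M$, differentiating this identity twice along $\partial M$ gives
\[
u_{\alpha\beta}(x_0)=\underline{u}_{\alpha\beta}(x_0)-(u_n-\underline{u}_n)(x_0)\,\Pi(e_\alpha,e_\beta),
\]
where $\Pi$ denotes the second fundamental form of $\partial M$. Together with Theorem \ref{C1} this gives $|u_{\alpha\beta}(x_0)|\le C$. For the mixed case I would use the barrier $v$ from Lemma \ref{LQ}. Pick a tangential first order operator $T$ corresponding to $e_\alpha$, differentiate the identity $u-\underline{u}=0$ tangentially, and combine with the differentiated equation \eqref{Quiik}. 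Using the concavity of $F$ in $U$ and the $C^2$ bound on $\underline{u}$, one shows
\[
\bigl|L\bigl(T(u-\underline{u})\bigr)\bigr|\le C_1\Bigl(1+\sum_i F^{ii}\Bigr)
\]
in $M_\delta$. Forming the auxiliary function
\[
\Phi=Av+B\rho^2\pm T(u-\underline{u}),
\]
and choosing $B$ then $A$ large (depending on $C_1$ and the constant $\varepsilon_0$ from Lemma \ref{LQ}), we obtain $L\Phi\le 0$ in $M_\delta$ and $\Phi\ge 0$ on $\partial M_\delta$. The maximum principle forces $\Phi\ge 0$ in $M_\delta$. Since $\Phi(x_0)=0$, evaluating $\nabla_n\Phi(x_0)\ge 0$ and using $\nabla_n v(x_0)=t$ together with the tangential bounds already established yields $|u_{\alpha n}(x_0)|\le C$.

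For the double normal component, with the above bounds the $(n-1)\times(n-1)$ tangential block of $U$ at $x_0$ has controlled eigenvalues. Since $\lambda[U]\in\Gamma_k$ and $F(U)=\psi\in[\inf\psi,\|\psi\|_{C^0}]$, a standard argument from Proposition \ref{NM} and Proposition \ref{sigma}(3) shows that if $u_{nn}$ (and hence the corresponding $U_{nn}$-entry) were to blow up, the quotient $(\sigma_k/\sigma_l)^{1/(k-l)}(\lambda[U])$ would blow up as well, contradicting $F(U)=\psi$. Thus $|u_{nn}(x_0)|\le C$, and combining all three cases with Theorem \ref{C2-0} completes the estimate.

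The principal obstacle lies in the mixed derivative step: one must arrange $L\Phi\le 0$ while keeping track of the first order term $-\psi_{p_i}\nabla_i$ which sits inside $L$ and which comes from differentiating the gradient-dependent right hand side $\psi(x,u,\nabla u)$, \emph{without} any concavity hypothesis on $\psi$ in the $p$ variable. This is precisely why Lemma \ref{LQ} was stated with the gain $-\tfrac{\varepsilon_0}{4}(1+\sum_i F^{ii})$ linear in $\sum_i F^{ii}$: that extra ellipticity factor absorbs both the unsigned $\psi_{p_i}$ contributions and the curvature terms produced when commuting $T$ past $\nabla^2$ via identities \eqref{req1}--\eqref{req4}.
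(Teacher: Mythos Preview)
Your overall architecture matches the paper's: reduce to the boundary via Theorem~\ref{C2-0}, then treat the tangential, mixed, and double--normal components separately. The tangential case is fine. However the other two cases contain genuine gaps.

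\textbf{Mixed derivatives.} The claimed bound $|L(T(u-\underline u))|\le C_1(1+\sum_i F^{ii})$ is not correct. When you apply the commutation formula \eqref{req0} in a boundary--adapted frame, the term $2Q^{ij}\Gamma^{l}_{i\alpha}\nabla_{jl}u$ appears; since the Christoffel symbols do \emph{not} vanish throughout $M_\delta$ in this frame, this term is only controlled by $C\sum_i\widetilde f_i|\widetilde\lambda_i|$ (cf.\ \eqref{L2}), which involves the very second derivatives you are trying to bound. Neither the concavity of $F$ nor the gain from Lemma~\ref{LQ} absorbs it, because $\sum_i\widetilde f_i|\widetilde\lambda_i|$ is not dominated by $1+\sum_i F^{ii}$. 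The paper deals with this by enlarging your barrier to
\[
\Phi=A_1v+A_2\rho^2-A_3\sum_{\beta<n}|\nabla_\beta(u-\underline u)|^2,
\]
so that $L$ applied to the last sum produces the good quadratic term $2\sum_{\beta<n}Q^{ij}u_{i\beta}u_{j\beta}\ge\sum_{i\ne r}\widetilde f_i\widetilde\lambda_i^2$ (Proposition~2.19 of \cite{Guan12}); this, together with Corollary~2.21 of \cite{Guan12}, swallows $\sum_i\widetilde f_i|\widetilde\lambda_i|$. Without that extra quadratic term your barrier argument cannot close.

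\textbf{Double normal derivative.} Your blow--up heuristic works when $\tau>1$, and (with a little more care than you indicate) when $\tau=1$ and $k<n$. It fails when $\tau=1$ and $k=n$: then the eigenvalues of $U$ at $x_0$ are approximately $(\widetilde\lambda_n,\ldots,\widetilde\lambda_n,\sum_{\alpha<n}\widetilde\lambda_\alpha')$, so $\sigma_n(\lambda[U])\approx\widetilde\lambda_n^{\,n-1}\cdot\mathrm{tr}(\nabla_{\alpha\beta}u)$. If $\mathrm{tr}(\nabla_{\alpha\beta}u)$ is not bounded below by a positive constant on $\partial M$, the quotient $(\sigma_n/\sigma_l)(\lambda[U])$ need not blow up as $u_{nn}\to\infty$, and your contradiction evaporates. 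The paper obtains the required positive lower bound on $\min_{\partial M}\mathrm{tr}(\nabla_{\alpha\beta}u)$ by a second barrier argument at the boundary point $x_1$ where this trace is minimized, again using $\Phi$ together with an auxiliary function $\Psi$ built from the second fundamental form; this step is not optional and is missing from your sketch.
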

\begin{proof}
By Theorem \ref{C2-0}, we only need to derive boundary estimates.
%For any point $x_0\in \partial M$, without loss of generality, we may assume
%that it is the origin of $\mathbb{R}^n$ and choose the coordinates so that the positive $x_n$ axis is the interior normal to $\partialM$ at $x_0=0$.
For any $ x_0\in\partial M$, we can choose the local frames $e_1,\cdots,e_n$ around $x_0$ such
that $e_n$ is interior normal to $\partial M$.

$\mathbf{Case~1:} $  Estimates of $\nabla_{\alpha\beta}u, \alpha, \beta=1,\cdots,n-1$ on $\partial M$.

Consider a point $x_0\in\partial M$. Since $u-\underline{u}=0$ on $\partial M$, therefore,
$$\nabla_{\alpha\beta}(u-\underline{u})=-\nabla_{n}(u-\underline{u})B_{\alpha\beta} \quad \mbox{on} ~\partial M,$$
where $B_{\alpha\beta} = \langle\nabla_{\alpha}e_{\beta},e_n\rangle$ denotes the second fundamental form of $\partial M$. Therefore,
\begin{eqnarray*}
|\nabla_{\alpha\beta}u| \leq C \quad \mbox{on} ~\partial M,
\end{eqnarray*}
where $C$ depends on $\|u\|_{C^1}$ and $\|\underline{u}\|_{C^2}$.

$\mathbf{Case~2:}$ Estimates of $ \nabla_{\alpha n}u$, $\alpha=1,\cdots,n-1$ on $\partial M$.\\
Let
\begin{equation}\label{Phi}
  \Phi=A_1v+A_2\rho^2-A_3\sum_{\beta<n}|\nabla_{\beta}(u-\underline{u})|^2,
\end{equation}
then combining with Lemma \ref{LQ}, we claim that
\begin{equation*}
\left\{
  \begin{aligned}
     & L(\Phi\pm\nabla_{\alpha}(u-\underline{u}))\leq0 &&in~M_{\delta},\\
     & \Phi\pm\nabla_{\alpha}(u-\underline{u})\geq0 &&on~\partial M_{\delta},
   \end{aligned}
   \right.
\end{equation*}
for suitably chosen positive constants $A_1, A_2, A_3$ and $L$ , $v$ are defined in Lemma \ref{LQ}. First we have for some uniform constant $\widehat{C}_0$,
\begin{equation*}
  L(\rho^2)=Q^{ij}\nabla_{ij}(\rho^2)-\psi_{p_i}\nabla_i(\rho^2)\leq \widehat{C}_0(1+\sum_iQ^{ii}),
\end{equation*}
and by \eqref{req0}, \eqref{Quii} and \eqref{Quiik},
\begin{eqnarray}\label{L2}
% \nonumber to remove numbering (before each equation)
 \nonumber |L\nabla_{\alpha}(u-\underline{u})|&\leq&2Q^{ij}\Gamma_{i\alpha}^l\nabla_{jl}u+C(1+\sum_iQ^{ii})\\
  &\leq& \widehat{C}_1(1+\sum_i\widetilde{f}_i|\widetilde{\lambda}_i|+\sum_i\widetilde{f}_i),
\end{eqnarray}
where $\widetilde{\lambda}_i (i=1, \cdots, n)$ are the  eigenvalues of $\nabla^2u$.

Furthermore, we get
\begin{eqnarray}\label{nab}
% \nonumber to remove numbering (before each equation)
\nonumber L|\nabla_{\beta}(u-\underline{u})|^2 &=& 2Q^{ij}\nabla_{\beta}(u-\underline{u})\nabla_{ij}\nabla_{\beta}(u-\underline{u})+2Q^{ij}\nabla_i\nabla_{\beta}(u-\underline{u}) \nabla_{j}\nabla_{\beta}(u-\underline{u})\\
  \nonumber&& -2\psi_{p_i}\nabla_{\beta}(u-\underline{u})\nabla_i\nabla_{\beta}(u-\underline{u})\\
  &\geq& 2Q^{ij}u_{i\beta}u_{j\beta}-\widehat{C}_2\left(1+\sum_i\widetilde{f}_i|\widetilde{\lambda}_i|+\sum_i\widetilde{f}_i\right).
\end{eqnarray}
By Proposition 2.19 in \cite{Guan12}, we know that there exists an index $r$ such that
\begin{equation}\label{gueq1}
  \sum_{\beta<n}Q^{ij}u_{i\beta}u_{j\beta}\geq\frac{1}{2}\sum_{i\neq r}\widetilde{f}_i\widetilde{\lambda}_i^2.
\end{equation}
Since $\widetilde{f}$ satisfies $\widetilde{f}_i=\frac{\partial\widetilde{f}}{\partial\widetilde{\lambda}_i}=\sum_iQ^{ii}>0$ , $\sum_i\widetilde{f}_i\widetilde{\lambda}_i=\sum_iQ^{ii}u_{ii}=\psi>0$ and $\widetilde{f}$ is a concave function, then by Corollary 2.21
in \cite{Guan12}, for index $r$ and $\varepsilon>0$,
\begin{equation}\label{gueq2}
  \sum_i\widetilde{f}_i|\widetilde{\lambda}_i|\leq\varepsilon\sum_{i\neq r}\widetilde{f}_i\widetilde{\lambda}_i^2+\frac{C}{\varepsilon}\sum_i\widetilde{f}_i+Q(r),
\end{equation}
where $Q(r)=\widetilde{f}(\widetilde{\lambda})-\widetilde{f}(\mathbf{1})$ if $\lambda_r\geq0$, $\mathbf{1}=(1,\cdots,1)$ and for some constant $K_0\geq0$,
$$Q(r)=\varepsilon nK_0^2\min_{1\leq i\leq n}\frac{1}{\widetilde{f}_i},\quad \mbox{if} ~\lambda_r<0.$$

Hence \eqref{L2}-\eqref{gueq2} yield that
\begin{eqnarray*}
% \nonumber to remove numbering (before each equation)
 \nonumber &&A_3\sum_{\beta<n}L|\nabla_{\beta}(u-\underline{u})|^2\pm L(\nabla_{\alpha}(u-\underline{u}))\\
 \nonumber&\geq& 2A_3\sum_{\beta<n}Q^{ij}u_{i\beta}u_{j\beta}
 -\widehat{C}_1\left(1+\sum_i\widetilde{f}_i|\widetilde{\lambda}_i|+\sum_i\widetilde{f}_i\right)\\
 \nonumber&&-A_3\widehat{C}_2(n-1)\left(1+\sum_i\widetilde{f}_i|\widetilde{\lambda}_i|+\sum_i\widetilde{f}_i\right)\\
 \nonumber&\geq&A_3\sum_{i\neq r}\widetilde{f}_i\widetilde{\lambda}_i^2-(A_3\widehat{C}_2(n-1)+\widehat{C}_1)\left(1+\varepsilon\sum_{i\neq r}\widetilde{f}_i\widetilde{\lambda}_i^2+\frac{C}{\varepsilon}\sum_i\widetilde{f}_i+\sum_i\widetilde{f}_i+Q (r)\right)\\
 \nonumber&\geq&\left(A_3-A_3\widehat{C}_2(n-1)\varepsilon-\widehat{C}_1\varepsilon\right)\sum_{i\neq r}\widetilde{f}_i\widetilde{\lambda}_i^2-A_3\widehat{C}_3(1+\sum_i\widetilde{f}_i)\\
 &\geq&-A_3\widehat{C}_3(1+\sum_i\widetilde{f}_i),
\end{eqnarray*}
by choosing $0<\varepsilon<\min\left\{\frac{1}{\widehat{C}_2(n-1)},1\right\}$ and $A_3>\max\left\{\frac{\widehat{C}_1\varepsilon}{1-\widehat{C}_2(n-1)\varepsilon},1\right\}$.
Combine with Lemma \ref{LQ} and choose $A_1\gg A_2\gg A_3\gg1$, then
\begin{equation*}
\left\{
\begin{aligned}
&L\left(\Phi\pm\nabla_{\alpha}(u-\underline{u})\right)\leq0\quad &&in~M_{\delta},\\
&\Phi\pm\nabla_{\alpha}(u-\underline{u})\geq0\quad &&on~\partial M_{\delta}.
\end{aligned}
\right.
\end{equation*}
Therefore by the maximum principle, we have
$$\Phi\pm\nabla_{\alpha}(u-\underline{u})\geq0\quad in ~M_{\delta}.$$
Thus we obtain
$$|\nabla_{n\alpha}u(x_0)|\leq\nabla_n\Phi(x_0)+|\nabla_{n\alpha}\underline{u}(x_0)|\leq C, \quad\alpha=1, \cdots, n-1. $$
%where $C$ depends on $\|u\|_{C^1(\overline{M})}$ and $\|\underline{u}\|_{C^2(\overline{M})}$.

$\mathbf{Case~3:} $ Estimates of $\nabla_{nn}u$ on $\partial M$.\\

We only need to show the uniform upper bound
$$\nabla_{nn}u(x_0)\leq C, \quad \forall~x_0\in\partial M,$$
since $\Gamma_k\subset \Gamma_1$ implies $\Delta u \geq 0$ and the lower bound for $\nabla_{nn} u$ follows from the estimate of $\nabla_{\alpha \beta} u$ and $\nabla_{\alpha n} u$.
We will divide the proof into two conditions. The case $\tau=1$ is more complicated and need classified discussion.

When $\tau>1$,  we have
\begin{eqnarray*}
\nonumber [U(x_0)]&=&\tau \Delta u(x_0) I- \nabla^2u(x_0)\\
\nonumber &\geq& \mbox{diag}(\tau \nabla_{nn}u(x_0), \cdots, \tau \nabla_{nn}u(x_0), (\tau-1) \nabla_{nn}u(x_0)) -\overline{C}_0I\\
\nonumber &\geq& ((\tau-1) \nabla_{nn}u(x_0)-\overline{C}_0)I,
\end{eqnarray*}
where $\overline{C}_0$ depends on $\|\nabla_{\alpha\beta}u\|_{C^0}$ and $\|\nabla_{\alpha n}u\|_{C^0}$. It is clear that
\begin{eqnarray*}
 \nonumber \psi(x_0, u(x_0), \nabla u(x_0))=F(U)(x_0)&=& F^{ij}(x_0) U_{ij}(x_0)\\
 &\geq & ((\tau-1) \nabla_{nn}u(x_0)-\overline{C}_0) \sum_l F^{ll}.
\end{eqnarray*}
Thus we obtain the upper bound as desired.

When $\tau=1$. By lemma 1.2 of \cite{CNS85} and the estimates of $\nabla_{\alpha \beta}u$, $\nabla_{\alpha n}u$, we can choose $R_1>0$ sufficiently large  such that if $\nabla_{nn}u(x_0)> R_1$,
\begin{equation*}
\left\{
\begin{aligned}
\widetilde{\lambda}_i[\nabla_{ij}u(x_0)]&=\widetilde{\lambda}_i^\prime [\nabla_{\alpha \beta}u(x_0)]+ o(1), \quad i=1, \cdots, n-1,\\
\widetilde{\lambda}_n [\nabla_{ij}u(x_0)]&=\nabla_{nn}u(x_0)\left(1+O(\frac{1}{\nabla_{nn}u(x_0)})\right). \end{aligned}
\right.
\end{equation*}
Here $\widetilde{\lambda}[\nabla_{ij}u] =(\widetilde{\lambda}_1[\nabla_{ij}u], \cdots, \widetilde{\lambda}_{n}[\nabla_{ij}u])$ denotes the eigenvalues of the $n\times n$ matrix $\nabla^2u$ and $\widetilde{\lambda}^\prime[\nabla_{\alpha \beta}u] =(\widetilde{\lambda}_1^\prime[\nabla_{\alpha \beta}u], \cdots, \widetilde{\lambda}_{n-1}^\prime[\nabla_{\alpha \beta}u])$ denotes the eigenvalues of the $(n-1)\times (n-1)$ matrix $\left[\nabla_{\alpha\beta}u\right]_{1\leq \alpha, \beta\leq n-1}$. For convenience, we denote
\begin{eqnarray*}
\widetilde{\lambda}_i= \widetilde{\lambda}_i[\nabla_{ij}u], \quad \widetilde{\lambda}_{\alpha}^\prime= \widetilde{\lambda}_{\alpha}^\prime [\nabla_{\alpha \beta}u], \quad \widehat{\lambda}_i= \sum_{l=1}^n \widetilde{\lambda}_{l}-\widetilde{\lambda}_i ,\quad \widehat{\lambda}_{\alpha}^\prime= \sum_{i=1}^{n-1} \widetilde{\lambda}_i^\prime- \widetilde{\lambda}_{\alpha}^\prime.
\end{eqnarray*}

If $k<n$,
\begin{eqnarray*}
 && F^{k-l}(U)(x_0)\\
 \nonumber&=& \frac{\sigma_k}{\sigma_l}\left(\sum_i \widetilde{\lambda}_i(x_0) -\widetilde{\lambda}_1(x_0), \cdots, \sum_i \widetilde{\lambda}_i(x_0) -\widetilde{\lambda}_n(x_0)\right)\\
 \nonumber&= & \frac{\sigma_k}{\sigma_l}\left(\widetilde{\lambda}_n(x_0)+\widehat{\lambda}_{1}^\prime(x_0)+o(1), \cdots, \widetilde{\lambda}_n(x_0)+\widehat{\lambda}_{n-1}^\prime(x_0)+o(1), \sum_{i=1}^{n-1} \widetilde{\lambda}_i^\prime(x_0)  +o(1)\right)\\
 \nonumber&\geq & \frac{\widetilde{\lambda}^{k}_n(x_0)+ o\left(\widetilde{\lambda}^{k-1}_n(x_0)\right)}{C_n^l\widetilde{\lambda}^{l}_n(x_0)+O\left(
 \widetilde{\lambda}^{l-1}_n(x_0)\right)},
\end{eqnarray*}
which implies the uniform upper bound of $\nabla_{nn}u(x_0)$.

If $k=n$, we show the uniform upper bound of $\nabla_{nn}u(x_0)$ by proving that there are uniform constants $\overline{C}_1$ such that
\begin{equation}\label{w89}
\min_{x \in \partial M} tr([\nabla_{\alpha\beta}u])\geq \overline{C}_1>0.
\end{equation}
%where $T([\nabla_{\alpha\beta}u]):=\mbox{trace}([\nabla_{\alpha\beta}u])$.
Suppose we have found such $\overline{C}_1$, then
\begin{eqnarray*}
 && F^{n-l}(U)(x_0)\\
 \nonumber&= & \frac{\sigma_n}{\sigma_l}\left(\widetilde{\lambda}_n(x_0)+\widehat{\lambda}_{1}^\prime(x_0)+o(1), \cdots, \widetilde{\lambda}_n(x_0)+\widehat{\lambda}_{n-1}^\prime(x_0)+o(1), \sum_{i=1}^{n-1} \widetilde{\lambda}_i^\prime(x_0)  +o(1)\right)\\
 \nonumber&\geq & \frac{\overline{C}_1\widetilde{\lambda}^{n-1}_n(x_0)+ o\left(\widetilde{\lambda}^{n-1}_n(x_0)\right)}{C_n^l\widetilde{\lambda}^{l}_n(x_0)+O\left(\widetilde{\lambda}^{l-1}_n(x_0)\right)},
\end{eqnarray*}
which implies the uniform upper bound of $\nabla_{nn}u(x_0)$. Hence we only need to prove \eqref{w89}.

Suppose that $tr([\nabla_{\alpha\beta}u])$ attains its minimum at $x_1 \in \partial M$. To show \eqref{w89}, we may assume $tr([\nabla_{\alpha\beta}u(x_1)])< \frac{1}{2}tr([\nabla_{\alpha\beta}\underline{u}(x_1)])$, since otherwise we are done as $tr([\nabla_{\alpha\beta}\underline{u}(x_1)])=\sum_i \widetilde{\lambda}_i[\nabla_{ij}\underline{u}] -\widetilde{\lambda}_n[\nabla_{ij}\underline{u}]>\overline{C}_2$. Let us compute
\begin{eqnarray*}
 \nabla_{\alpha\alpha}u=\nabla_{\alpha\alpha}\underline{u}-  \nabla_n(u-\underline{u}) B_{\alpha\alpha} \quad \mbox{on}~\partial M.
\end{eqnarray*}
It follows that
\begin{eqnarray}\label{w42}
 \nonumber\nabla_n(u-\underline{u})(x_1) \sum_{\alpha} B_{\alpha\alpha}(x_1)&=&tr([\nabla_{\alpha\beta}\underline{u}(x_1)])-tr([\nabla_{\alpha\beta}u(x_1)])\\
  &\geq& \frac{1}{2}tr([\nabla_{\alpha\beta}\underline{u}(x_1)])>\frac{\overline{C}_2}{2}.
\end{eqnarray}
For any $x\in \partial M$ near $x_0$, applying that $tr([\nabla_{\alpha\beta}u])\mid_{\partial M}$ is minimized at $x_1$ yields,
$$\nabla_n(u-\underline{u})(x) \sum_{\alpha}B_{\alpha\alpha}(x)\leq tr([\nabla_{\alpha\beta}\underline{u}(x)])-tr([\nabla_{\alpha\beta}\underline{u}(x_1)])  +\nabla_n(u-\underline{u})(x_1) \sum_{\alpha}B_{\alpha\alpha}(x_1).$$
Note that $B_{\alpha\alpha}$ is smooth near $\partial M$ and $0<u-\underline{u}\leq C$, adding \eqref{w42}, we can choose a constant $\delta$ sufficiently small such that
$$\sum_{\alpha} B_{\alpha\alpha}  \geq \overline{C}_3>0   \quad \mbox{in}~ M\cap B_{\delta}(x_1)$$
for some uniform constant $\overline{C}_3>0$. Therefore
\begin{eqnarray*}
\nabla_n(u-\underline{u})(x_1) = \Psi(x_1), \quad \nabla_n(u-\underline{u})(x) \leq \Psi(x)\quad \mbox{on} ~B_{\delta}(x_1)\cap \partial M,
\end{eqnarray*}
where $\Psi= \left(\sum_{\alpha} B_{\alpha\alpha} (x) \right)^{-1} \left(tr([\nabla_{\alpha\beta}\underline{u}(x)])-tr([\nabla_{\alpha\beta}\underline{u}(x_1)])  +\nabla_n(u-\underline{u})(x_1) \sum_{\alpha}B_{\alpha\alpha}(x_1)\right)$ is smooth in $M\cap B_{\delta}(x_1)$.

We now apply the argument of $\nabla_{\alpha n}$ again. For $A_1\gg A_2\gg A_3\gg 1$, it remains to prove that
\begin{equation*}
\left\{
\begin{aligned}
&L\left(\Phi+\Psi-\nabla_n(u-\underline{u})\right)\leq0\quad &&in~ M\cap B_{\delta}(x_1),\\
&\Phi+\Psi-\nabla_n(u-\underline{u})\geq 0\quad &&on~ \partial \left(M\cap B_{\delta}(x_1)\right).
\end{aligned}
\right.
\end{equation*}
 According to the maximum principle, we have
$$\Phi+\Psi-\nabla_n(u-\underline{u})\geq 0\quad \mbox{in}~ M\cap B_{\delta}(x_1).$$
Thus $\nabla_n\Psi(x_1)-\nabla_{nn}(u-\underline{u})(x_1)\geq-\nabla_n\Phi(x_1)\geq-C$, which implies that $\nabla_{nn}u(x_1)\leq C$.
Therefore,
$$\widetilde{\lambda}_i(x_1)\leq C, \quad i=1,\cdots,n.$$
It is clear that  $\widehat{\lambda}=(\widehat{\lambda}_1,\cdots,\widehat{\lambda}_n)\in \Gamma_n$. Hence
\begin{eqnarray*}
 \psi^{n-l}(x_1, u(x_1), \nabla u(x_1))&=&\frac{\sigma_n(\widehat{\lambda})}{\sigma_l(\widehat{\lambda})}\leq \frac{\sigma_n(\widehat{\lambda})}{ C_n^l \sigma^{\frac{l}{n}}_n(\widehat{\lambda})}=\frac{\sigma^{1-\frac{l}{n}}_n(\widehat{\lambda})}{C_n^l}.
\end{eqnarray*}
It follows that
$$\widehat{\lambda}_i (x_1) \geq \overline{C}_4.$$
We assume without loss of generality that the eigenvalue  $\widehat{\lambda}_i(x_1)$ satisfy $\widehat{\lambda}_1(x_1)\leq \widehat{\lambda}_2(x_1)\leq \cdots \leq\widehat{\lambda}_n(x_1)$. According to the Cauchy interlacing inequalities (see e.g. \cite{Wil63}, p. 103-104),
$$\widehat{\lambda}_{\alpha} (x_1)\leq \widehat{\lambda}_{\alpha}^\prime(x_1)\leq \widehat{\lambda}_{\alpha+1} (x_1).$$
Hence the claim \eqref{w89} holds and we obtian the upper bound of $\nabla_{nn}u(x_0)$ as desired.
\end{proof}

%%%%%%%%%%%%%%%%%%%%%%%%%%%%%%%%%%%%%%%%%%%%%%%%%%%%%%%%%%%%%%%%%%%%%%%%%%%%%%%%%%%%%%%%%%%%%%%%%%%%%%%%%%%%%%%%%%%%%%%%%%%%%%%%%%%%%%%%%%%%%%%%%%%%%%%%%%%%%%%%%%%%
\begin{proof}[Proof of Theorem \ref{main-1}]
The theorem can be easily obtained by Lemma \ref{C0},  Theorem \ref{C1},   \ref{C2-0} and  \ref{C2-1}.
\end{proof}

\begin{proof}[Proof of Theorem \ref{main}]
From Theorem \ref{main-1}, we have
uniform estimates in $C^2(\overline{M}
)$ for classical elliptic solutions of the Dirichlet problems:
\begin{equation*}
\left\{
\begin{aligned}
&\left(\frac{\sigma_k}{\sigma_l}\right)^{\frac{1}{k-l}}(U)=tf(x,u,\nabla u)+(1-t) \left(\frac{\sigma_k}{\sigma_l}\right)^{\frac{1}{k-l}}(\underline{U})&&in~
M,\\
&u = \varphi &&on~\partial M,
\end{aligned}
\right.
\end{equation*}
for $0\leq t\leq 1$. Theorem \ref{main} then follows from the Evans-Krylov second derivative H$\ddot{o}$lder estimates of
Evans, Krylov and Caffarelli-Nirenberg-Spruck, and the method of continuity, for more details see \cite{Gi98}. The uniqueness assertion is immediate from the maximum principle.
\end{proof}

%%%%%%%%%%%%%%%%%%%%%%%%%%%%%%%%%%%%%%%%%%%%%%%%%%%%%%%%%%%%%%%%%%%%%%%%%%%%%%%%%%%%%%%%%%%%%%%%%%%%%%%%%%%%%%%%%%%%%%%%%%%%%%%%%%%%%%%%%%%%%%%%%%%%%%%%%%%%%%%%%%%%

%%%%%%%%%%%%%%%%%%%%%%%%%%%%%%%%%%%%%%%%%%%%%%%%%%%%%%%%%%%%%%%%%%%%%%%%%%%%%%%%%%%%%%%%%%%%%%%%%%%%%%%%%%%%%%%%%%%%%%%%%%%%%%%%%%%%%%%%%%%%%%%%%%%%%%%%%%%%%%%%%%%%

\end{document}